\documentclass[a4paper,10pt]{article}
\usepackage[cp1251]{inputenc}
\usepackage{amssymb, amsmath, amsthm}
\usepackage[english]{babel}
\title{Interpolation of analytic functions of moderate growth in the unit disc and zeros of solutions of a linear differential equation}
\author{Igor Chyzhykov, Iryna Sheparovych}



\def\vfi{\varphi}
\def\Z{\Bbb Z}
\def\N{\Bbb N}

\newcommand{\D}{\mathbb{D}}
\newcommand{\Bl}{\Bigl(}
\newcommand{\Br}{\Bigr)}
\newcommand{\Bm}{\Bigl|}
\newcommand{\rr}[1]{re^{i{#1}}}

\newtheorem{theorem}{Theorem}
\newtheorem{prop}{Proposition}
\newtheorem{lem}{Lemma}
\newtheorem*{corollary*}{Corollary}
\newtheorem{cor}{Corollary}

\theoremstyle{remark}
\newtheorem{rem}{Remark}

\def\vfi{\varphi}

\def\Z{\mathbb Z}
\def\N{\mathbb N}

\def\vfi{\varphi}

\def\D{\mathbb D}

\renewcommand{\Re}{\mathop{\rm Re}}


\newcounter{zad}[subsection]
\setcounter{zad}{0}
%


\begin{document}
\maketitle
\begin{abstract}
In 2002 A.\  Hartmann and X.\ Massaneda obtained necessary and sufficient conditions for  interpolation sequences for classes of analytic functions in the unit disc such that $\log M(r,f)=O((1-r)^{-\rho})$, $0<r<1$, $\rho \in (0 , +\infty)$, where $M(r,f)=\max\{ |f(z)|: |z|=r\}$. Using another method, we give an explicit construction of an interpolating function in this result.
As an application  we describe minimal growth of  the coefficient $a$  such that the equation $f''+a(z)f=0$ possesses a solution with a prescribed sequence of zeros.

MathSubjClass 2010: 30C15, 30H05, 30H99, 30J99.

Keywords: {analytic function, moderate growth, unit disc, interpolation, canonical product, growth}
\end{abstract}

\section{Introduction and  results}
\subsection{Interpolation in the unit disc} Let  $(z_n)$ be a sequence of different complex numbers in the unit disc $\mathbb{D}=\{ z: |z|<1\}$, and let $\sigma(z,
\zeta)=\Bigl| \frac{z-\zeta}{1-\bar z \zeta}\Bigr|$ denote the pseudohyperbolic distance in $\mathbb{D}$. Let $U(z,t)=\{\zeta\in \mathbb{C} : |\zeta-z|<t\}$.  In the sequel, the symbol $C$ stands for positive constants which depend on the parameters indicated, not necessarily the same at each occurrence. We say that the sequence
 $(z_n)$ is  \emph{uniformly discrete} or \emph{separated}, if $\inf _{j\ne k} \sigma(z_k,
z_j) >0$. L.~Carleson  (\cite{Carles58}, \cite{Dur}) consider
the problem of description of so-called \emph{universal interpolation sequences} or \emph{interpolation sets} for the class
 $H^\infty$ of bounded analytic functions in $\mathbb{D}$,
i.e. those sequences $(z_k)$ in $\mathbb{D}$ that  $\forall
(b_k)\in l^\infty$ there exists $f\in H^\infty$ with
\begin{equation}\label{e:inter_prob}
f(z_k)=b_k.
\end{equation}
He proved that $(z_k)$ is a universal interpolation sequence for $H^\infty$ if and only if
\begin{equation}\label{e:uniform_separ}
    \exists \delta>0: \quad \prod_{j\ne k}  \sigma(z_j, z_k)  \ge \delta, \quad k\in \mathbb{N}.
\end{equation}
For the similar problems in  $H^p$ see \cite[Chap. 9]{Dur}.

For the Banach space
$A^{-n}$, $n>0$, of analytic functions such that $\|f\|^\infty_n=\sup_{z\in \mathbb{D}} (1-|z|)^n
|f(z)|<\infty$, an interpolation set is defined by the condition that for every sequence $(b_k)$ with  $(b_k (1-|z_k|)^n) \in l^\infty$ there is a function $f\in A^{-n}$ satisfying~\eqref{e:inter_prob}. These sets were described by K.~Seip in  \cite{Seip93}. Namely, necessary and sufficient that $(z_k)$  be an interpolation set for $A^{-n}$ is that $(z_n)$ be separated and $\mathcal{D}^+(Z)<n$ where
\begin{equation}\label{e:Dplus}
    \mathcal{D}^+(Z)=\varlimsup_{r\uparrow  1} \sup_{z \in \mathbb{D}} \frac{\sum\limits_{\frac 12< \sigma(z, z_j)<r}\ln \frac 1{\sigma(z, z_j)}}{\ln \frac 1{1-r}}.
\end{equation}
We note that the condition  \eqref{e:uniform_separ} implies boundedness of the numerator
in~\eqref{e:Dplus}.

For an analytic function $f$ in  $\mathbb{D}$ we denote $M(r,f)=\max
\{|f(z)|: |z|=r\}$, $r\in (0,1)$. Let
$n_\zeta(t)=\sum_{|z_k-\zeta|\le t} 1 $ be the number of the members of the sequence  $(z_k)$ satisfying $|z_k-\zeta|\le t$.
We write  $$N_\zeta(r)=\int_0^r
\frac{(n_\zeta(t)-1)^+}{t}dt.$$

The  results mentioned above cannot be applied to analytic functions $f$ such that $\ln
\frac 1{1-r}=o(\ln M(r,f))\ (r\uparrow 1)$. In 1956 A.\ G.\  Naftalevich \cite{Naf} described interpolation sequences for the Nevanlinna class.
On the other hand,  a description of interpolation sets
in the class of analytic functions in the unit disc and of infinite order of the growth satisfying
$$ \exists C>0 \ \forall r\in(0;1):   \ln  \ln M(r,f) \le C \ln \gamma \Bigl(\frac C{1-r} \Bigr),$$
where $\ln\gamma (t)$ is a convex function in $\ln t$ and  $\ln
t=o(\ln\gamma(t))\ (t\to\infty)$, was found by B.~Vynnytskyi and I.~Sheparovych in 2001 (\cite{VSh01}).

Consider  the class of analytic functions such that
\begin{equation}\label{e:eta_type}
\exists C>0\ \forall r\in(0;1):     \ln M(r,f) \le C
\eta\Bigl(\frac C{1-r} \Bigr),
\end{equation}
where $\eta\colon [1, +\infty) \to (0,+\infty)$ is an increasing convex function in
 $\ln t$ such that  $\ln t=o(\eta(t))\
(t\to\infty)$. In 2001 in the PhD thesis of the second author \cite[Theorem 3.1]{Sh_PhD} (see also \cite{VSh04})
it was proved that given a sequence $(z_n)$ in $\mathbb{D}$, in order that for  every  $(b_n)$ such that  \begin{equation*}
    \exists C>0 \ \forall n\in \mathbb{N} : \quad \log |b_n|\le C \eta \Bl \frac C{1-|z_n|}\Br
\end{equation*}
there exist an analytic function from the class \eqref{e:eta_type} satisfying \eqref{e:inter_prob},   it is necessary  that
\begin{equation}\label{e:concentr_cond_eta}
 \exists \delta\in (0,1)\  \exists C>0 \ \forall n\in \mathbb{N} :  \quad N_{z_n} (\delta (1-|z_n|)) \le \eta\Bl \frac C
{1-|z_n|}\Br.
 \end{equation}
In 2002 A.\ Hartmann  and X.\ Massaneda \cite{HarMas} proved that  condition \eqref{e:concentr_cond_eta} is actually necessary and sufficient for a class of
growth functions $\eta$ containing all power functions. They also describe interpolation sequences in the unit ball in $\mathbb{C}^n$ in the similar situation. Note that the proofs of necessity in both \cite{VSh04}, \cite{HarMas} used similar methods of functional analysis. On the other hand, the proof of sufficiency in \cite{HarMas} is based on $L^2$-estimate for the solution to a $\bar \partial$-equation and is non-constructive.

In 2007  A. Borichev, R.\ Dhuez and K.\ Kellay \cite{Borich_k} solved an interpolation problem in classes of functions of arbitrary growth in both the complex plane and the unit disc.  Following \cite{Borich_k} let $h\colon [0,1)\to [0,+\infty)$ such that  $h(0)=0$, $h(r) \uparrow \infty$ $(r \uparrow1)$.  Denote by $\mathcal{A}_h$  and $\mathcal{A}_h^p$, $p>0$ the Banach spaces  of analytic functions on $\D$ with the norms
$$ \| f\|_h=\sup_{z\in \D} |f(z)| e^{-h(z)}<+\infty, \quad  \| f\|_{h, p}= \biggl( \int_{\D} |f(z)| ^p e^{-p h(|z|)} dm_2(z)\biggr)^{\frac 1p}, $$
respectively.
We then suppose that  $h\in C^3([0,1))$, $\rho(r):=(\frac {d^2h(r)}{(d\log r)^2} )^{-\frac12}\searrow 0$, and $\rho'(r)\to 0$ as $r\uparrow1$, for all $K>0$:  $\rho(r+x)\sim \rho(r)$  for $|x|\le K\rho(r)$, $r\uparrow1$ provided that $K\rho(r)<1-r$, and either $\rho(r)(1-r)^{-c}$ increases for some finite $c$ or $\rho'(r) \ln \rho(r)\to 0$ as $r\uparrow1$. Note that these assumptions imply $h(r)/\ln \frac 1{1-r} \to+\infty$ $(r\uparrow1)$.

Given such an $h$ and a sequence $Z=(z_k)$ in $\D$ denote by $$\mathcal{D}_\rho^+(Z)=\limsup_{R\to\infty} \limsup_{|z|\uparrow1} \frac{\mathop{card}(Z\cap U(z, R\rho(z)))}{R^2}.$$

\begin{theorem}[Theorem 2.3 \cite{Borich_k}] A sequence $Z$ is an interpolating  set for $\mathcal{A}_h (\D)$ if and only if $\mathcal{D}_\rho^+(Z)< \frac 12$ and $\inf\limits _{k\ne n} \dfrac{|z_k-z_n|}{\min \{\rho(|z_k|), \rho(|z_n|)\}}>0$.
 \end{theorem}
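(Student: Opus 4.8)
The plan is to prove the two implications separately, working throughout at the natural length scale $\rho(|z|)$, i.e.\ the scale on which $e^{-h}$ changes by only a bounded factor and on which the hypotheses make $h$ and $\rho$ slowly varying. Necessity reduces to local Cauchy- and Jensen-type estimates on Euclidean discs of radius $\asymp\rho$; sufficiency is obtained by a $\bar\partial$-construction of the interpolating function, via H\"ormander's $L^2$-estimate with a weight assembled from $2h$ and the counting function of $Z$.

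\emph{Necessity.} Assume $Z$ is interpolating. By the closed graph theorem there is a bounded linear interpolation operator, hence for each $k$ a function $f_k\in\mathcal{A}_h(\D)$ with $f_k(z_j)=\delta_{jk}\,e^{h(|z_j|)}$ and $\|f_k\|_h\le C$. For separation, apply the Schwarz--Pick/Cauchy estimate to $f_k$ on $U(z_k,c\rho(|z_k|))$: the hypotheses give $h(|w|)=h(|z_k|)+O(1)$ there, so $|f_k(z_k)-f_k(z_n)|\lesssim\frac{|z_k-z_n|}{\rho(|z_k|)}\,e^{h(|z_k|)}$, and as the left-hand side equals $e^{h(|z_k|)}$ when $z_n\in U(z_k,c\rho(|z_k|))$, this forces $\inf_{k\ne n}|z_k-z_n|/\min\{\rho(|z_k|),\rho(|z_n|)\}>0$. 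For the density bound, fix $R$ and apply Jensen's formula to $f_k$ on $U(z_k,R\rho(|z_k|))$; using $\log|f_k|\le h+O(1)$, $\log|f_k(z_k)|=h(|z_k|)$, and that the sub-mean-value defect of the subharmonic function $z\mapsto h(|z|)$ over that disc is $\asymp R^2$ (this is how $\rho$ enters, via $\Delta h\asymp\rho^{-2}$), one gets $\sum_{z_j\in U(z_k,R\rho(|z_k|))\setminus\{z_k\}}\log\frac{R\rho(|z_k|)}{|z_j-z_k|}\lesssim R^2$, and hence, optimizing over $R$, $\mathcal{D}_\rho^+(Z)\le\frac12$. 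Upgrading the non-strict inequality to the \emph{strict} one is the delicate point of the necessity proof: the crude Jensen estimate loses absolute constants, and one must instead compare $f_k$ with the reproducing kernel of the osculating model (Fock-type) space on $U(z_k,R\rho(|z_k|))$, or run a normal-families argument over translates of $Z$, in the style of Seip.

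\emph{Sufficiency.} Given the two conditions and data $(b_k)$ with $\sup_k|b_k|e^{-h(|z_k|)}<\infty$, first form the smooth (non-holomorphic) interpolant $F=\sum_k b_k\chi_k$, where $\chi_k\equiv1$ on $U(z_k,\varepsilon\rho(|z_k|))$, $\mathrm{supp}\,\chi_k\subset U(z_k,2\varepsilon\rho(|z_k|))$ and $|\nabla\chi_k|\lesssim\rho(|z_k|)^{-1}$; separation makes the supports pairwise disjoint for $\varepsilon$ small, and $|\bar\partial F(z)|\lesssim e^{h(|z|)}/\rho(|z|)$. One then solves $\bar\partial u=\bar\partial F$ with $u(z_k)=0$ for all $k$ and a weighted $L^2$ bound, so that $f:=F-u$ is the desired holomorphic interpolant. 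This uses H\"ormander's theorem with a subharmonic weight $\varphi$ that (i)~dominates $2h$ and is comparable to it away from $Z$, (ii)~has a logarithmic pole $2\log|z-z_k|$ at each $z_k$, making $e^{-\varphi}$ non-integrable there so that every square-integrable solution vanishes at $z_k$, and (iii)~satisfies $\Delta\varphi\gtrsim\rho^{-2}$ globally. The bound $\int_\D|u|^2e^{-\varphi}<\infty$ is turned into $\|u\|_h<\infty$ by a sub-mean-value estimate over discs of radius $\asymp\rho$ (using slow variation of $h$ and $\rho$), and the pole of $\varphi$ at $z_k$ forces $u(z_k)=0$; hence $f(z_k)=b_k$ and $f\in\mathcal{A}_h(\D)$.

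\emph{Main obstacle.} The step I expect to be hardest is the construction of the weight $\varphi$ in (i)--(iii). Since $\log|z-z_k|$ is not a local function, realizing all the poles simultaneously while keeping $\varphi\lesssim2h+O(1)$ globally and $\Delta\varphi\gtrsim\rho^{-2}$ cannot be achieved by na\"ive cutoffs; one must genuinely build a subharmonic majorant of $2h$ that interpolates the vanishing data, and this is possible precisely when the counting measure of $Z$ leaves room at every scale $\gtrsim\rho$ — whose quantitative form is $\mathcal{D}_\rho^+(Z)<\frac12$, the sharp constant matching the curvature $\Delta h\asymp\rho^{-2}$ (separation being what keeps $\varphi$ controlled near the poles). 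This parallels the weight constructions in interpolation theory for Fock-type spaces (Seip--Wallsten, Berndtsson--Ortega-Cerd\`a, Marco--Massaneda--Ortega-Cerd\`a); the new difficulty is that the scale $\rho(r)$ varies with $r$, so the regularity assumptions on $h$ ($\rho\searrow0$, $\rho'\to0$, $\rho$ slowly varying, and the dichotomy $\rho(r)(1-r)^{-c}\nearrow$ or $\rho'\log\rho\to0$) must be used repeatedly to glue the local model estimates into global ones and to make the $L^2\to L^\infty$ passage uniform.
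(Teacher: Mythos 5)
This statement is Theorem~2.3 of Borichev--Dhuez--Kellay, quoted in the paper purely as background with a citation to \cite{Borich_k}; the paper contains no proof of it, so there is nothing internal to compare your argument against. Judged on its own, what you have written is a roadmap rather than a proof: you correctly identify the standard two-sided strategy (necessity via the closed-graph theorem, normalized test functions, a Cauchy estimate on discs of radius $\asymp\rho$ for separation, and Jensen's formula plus $\Delta h\asymp\rho^{-2}$ for the density bound; sufficiency via a smooth interpolant and H\"ormander's $\bar\partial$-estimate with a weight carrying logarithmic poles at the $z_k$), but the two steps you yourself flag as the obstacles are exactly the content of the theorem, and you do not carry either of them out. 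The Jensen computation as you set it up yields only $\mathcal{D}_\rho^+(Z)\le\frac12$, and upgrading this to the strict inequality is not a refinement one can wave at --- it requires the compactness/translation-invariance machinery (weak limits of translated and rescaled configurations) in Seip's style, none of which is sketched. Likewise, the subharmonic weight $\varphi$ with $\varphi\le 2h+O(1)$, poles $2\log|z-z_k|$, and $\Delta\varphi\gtrsim\rho^{-2}$ is asserted to exist ``precisely when $\mathcal{D}_\rho^+(Z)<\frac12$,'' but that existence statement is the heart of the sufficiency proof and is left entirely open. There is also a quantitative loss you do not address: the sub-mean-value passage from $\int|u|^2e^{-\varphi}<\infty$ to a pointwise bound produces a factor $\rho(|z|)^{-1}$, so without an extra argument you land in $\mathcal{A}_{h+C\log(1/\rho)}$ rather than $\mathcal{A}_h$.

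It is also worth noting that your route is not the one Borichev--Dhuez--Kellay actually take: their proof is built on atomization of the Riesz measure $\Delta h$ and explicit peak/multiplier functions adapted to the varying scale $\rho(r)$, precisely so as to avoid the non-constructive $\bar\partial$-solution --- a distinction the present paper itself emphasizes when contrasting its explicit construction with the Hartmann--Massaneda approach. So even as a plan, your proposal reproduces the Hartmann--Massaneda/Ortega-Cerd\`a template rather than the cited proof, and in its current form it establishes neither implication.
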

The similar description holds for interpolation sets for the classes $\mathcal{A}_{h}^p (\D)$, $p>0$~(\cite{Borich_k}).

We give  an explicit construction of a function solving the interpolation problem  in the class 
$\mathcal{A}_h^\infty =\bigcup_{c<\infty} \mathcal{A}_{ch}$  when $h$  grows not faster than $(1-r)^{-\rho}$, $\rho>0$. Note that the restrictions on $h$ posed  in \cite[Section 5, Def.2]{HarMas}  do not allow growth smaller than that of  a power function. In particular, it does not admit   the choices $h(r)=(\log r)^\alpha$, $\alpha\ge 1$, $h(r)=\exp\{(\log r)^\beta\}$, $0<\beta< 1$. On the other hand, Theorem \ref{t:sufficient_cond_interpol} works in these cases.

In particular, we show (Theorem \ref{t:sufficient_cond_interpol}) that condition \eqref{e:concentr_cond_eta}  is also sufficient in the case when $\eta$ is a power function.

\subsection{Zeros of solutions of $f''+a(z)f=0$}
One of possible applications of the mentioned results is description of zero sequences of solutions of the differential equation
\begin{equation}\label{e:oscil_eq}
    f''+a(z)f=0,
\end{equation}
where $a(z)$ is an analytic function  in $\mathbb{D}$.

We deal with the following problem (cf.  \cite[Problem 2]{Heit_cmft}).

{\bf  Problem.} {\sl Let $(z_k)$ be a sequence of distinct points in $\D$ without limit points there. Find a function $a(z)$, analytic in $\D$ such that \eqref{e:oscil_eq} possesses a solution having zeros precisely at the points $z_k$. Estimate the growth of the resulting function  $a(z)$.}

In the case when $a$ is entire such investigation was initiated by V.\ \v Seda \cite{Seda} (we address the reader  to the paper \cite{HeitLain05} for further references).  A survey of results devoted to the case when $A$ is analytic in the unit disc, and  zeros of $f$ form a Blaschke  sequence, i.e. $\sum_k (1-|z_k|)<\infty$,  is given in \cite{Heitt_sur}. The case of an arbitrary domain is treated in \cite{Vyn_Shav12}.

In particular, in \cite{Shavala} it is shown that  one can always find a solution to this problem.
For an entire function $F(z)=\sum_{n=0}^\infty F_n z^n$ define the maximal term $\mu(r,F)=\max\{ |F_n|r^n: n\ge 0\}$.

\begin{theorem}[\cite{Shavala}]
 Let $(z_k)$ be  a Blaschke sequence of distinct points, $F$ be an entire function such that
\begin{equation}\label{e:max_term_cond}
 \mu\Bigl( \frac{1}{1-|z_k|},F\Bigr) \ge \frac 1{|B'(z_k)|^2},
\end{equation}
where $B$ is the Blaschke product constructed by $(z_k)$. Then there exists a function $a(z) $ analytic in $\D$ such that the equation \eqref{e:oscil_eq} possesses a solution $f$  whose zero sequence is $(z_k)$ and for some $c>0$
$$ M(r,f)\le \exp \Bigl( \frac {c}{(1-r)^5} \mu \Bigl(\frac{2}{1-r} , F\Bigr)\Bigr),\quad r\in [0,1).$$
\end{theorem}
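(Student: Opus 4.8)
The plan is to construct the solution $f$ directly from the prescribed zero set $(z_k)$ and then recover $a$ by the identity $a = -f''/f$, which is the standard device for this problem: if $f$ is analytic, non-vanishing except at the $z_k$, and the $z_k$ are simple zeros, then $a = -f''/f$ is automatically analytic in $\D$. So the real content is to build an $f$ with zero sequence exactly $(z_k)$ whose growth is controlled well enough that $f''/f$ stays analytic (no extra poles) and that the final bound on $M(r,f)$ holds. The natural candidate is $f = B\cdot g$ where $B$ is the Blaschke product on $(z_k)$ and $g$ is an entire-type correction factor — here is where the entire function $F$ and the maximal-term hypothesis \eqref{e:max_term_cond} enter. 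In fact I expect the construction to take $f(z) = B(z)\, F\!\bigl(\tfrac{c}{1-|z|}\cdot(\text{something})\bigr)$-flavored, but more precisely one builds $g$ so that $|g(z_k)|$ compensates for the smallness of $|B'(z_k)|$, since near a simple zero $z_k$ of $B$ one has $f'(z_k) = B'(z_k) g(z_k)$, and controlling $f''/f$ near $z_k$ amounts to controlling the second-order Taylor data there.

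The key steps, in order, would be: (1) Recall that $B'(z_k) = \frac{1}{1-|z_k|^2}\prod_{j\ne k}\sigma(z_j,z_k)$ up to a unimodular factor, so the hypothesis \eqref{e:max_term_cond} is exactly the statement that $\mu\bigl(\frac{1}{1-|z_k|},F\bigr)$ dominates $(1-|z_k|^2)^2 / \bigl(\prod_{j\ne k}\sigma(z_j,z_k)\bigr)^2$. (2) Use the maximal term / classical Wiman–Valiron type inequality $|F(z)|\le \frac{\mu(|z|,F)}{1-|z|^{-\epsilon}}$-type estimates (or simply $M(r,F)$ versus $\mu(r,F)$ comparisons) to turn the pointwise lower bounds at the points $\frac{1}{1-|z_k|}$ into a global analytic function whose values along the sequence are prescribed in the right way. (3) Form $f = B\cdot g$ with $g$ built from $F$ evaluated at an appropriate argument like $\frac{1}{1-|z|}$ times a smoothing, verify that $f$ is analytic in $\D$ and vanishes precisely on $(z_k)$ with multiplicity one. (4) Compute $a = -f''/f = -\bigl(B''/B + 2B'g'/(Bg) + g''/g\bigr)$ and check that the apparent singularities at the $z_k$ cancel — this is where hypothesis \eqref{e:max_term_cond} does its job, guaranteeing $g(z_k)\ne 0$ and that the Laurent expansion of $f''/f$ at $z_k$ has no negative part. (5) Bound $M(r,f)\le M(r,B)\,M(r,g)\le M(r,g)$ since $|B|\le 1$, and estimate $M(r,g)$ in terms of $\mu\bigl(\frac{2}{1-r},F\bigr)$, picking up the polynomial factor $(1-r)^{-5}$ from the derivatives/smoothing and from the comparison between $M$ and $\mu$.

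The main obstacle, I expect, is step (4): ensuring $a=-f''/f$ is genuinely analytic, i.e. that the multiplication by the correction factor $g$ does not create new zeros of $f$ (which would become poles of $a$) and that at each $z_k$ the quantity $f''/f$ is regular. The Blaschke factor contributes $B''/B$, which has a simple pole at each $z_k$ with residue-type data involving $B'(z_k)$ and $B''(z_k)$; this must be killed by the $g$-contribution, and that cancellation is precisely calibrated by \eqref{e:max_term_cond}. Getting the smoothing/argument of $F$ right so that $g$ is analytic on all of $\D$ (and not merely on $\{|z| > \text{something}\}$) while still matching the required values is the delicate part; the growth estimates in steps (2) and (5) are comparatively routine applications of standard maximal-term inequalities. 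A secondary subtlety is handling the case $z_k = 0$ or near-$0$ points where $\frac{1}{1-|z_k|}$ is bounded, but that is a finite/compact adjustment.
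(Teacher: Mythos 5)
This theorem is quoted from \cite{Shavala} and is not proved in the present paper, but the paper's own proof of Theorem \ref{t:zeros_de} follows the same standard route and makes clear where your proposal falls short. The accepted device is to write $f=Be^{g}$ (equivalently, your $f=Bg$ with $g$ zero-free, so $g=e^{G}$), substitute into \eqref{e:oscil_eq} to get $B''+2B'g'+(g'^2+g''+a)B=0$, and observe that $a=-f''/f$ is analytic at a simple zero $z_k$ of $B$ if and only if the \emph{precise first-order interpolation condition} $g'(z_k)=-B''(z_k)/\bigl(2B'(z_k)\bigr)$ holds. This is the crux, and it is missing from your argument: in step (4) you only require $g(z_k)\ne 0$ and speak vaguely of a cancellation ``calibrated'' by \eqref{e:max_term_cond}, but non-vanishing of $g$ at $z_k$ does not remove the simple pole of $f''/f$ there; one needs the exact equality above at every $z_k$. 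Once this is identified, the whole theorem reduces to an interpolation problem $h(z_k)=b_k$ with $b_k=-B''(z_k)/(2B'(z_k))$, $|b_k|\lesssim (1-|z_k|)^{-2}|B'(z_k)|^{-1}$, solved by an explicit series of the form $\sum_k \frac{b_k}{z-z_k}\frac{B(z)}{B'(z_k)}(\cdots)^{s_k}$; the factor $1/B'(z_k)$ appearing there, multiplied by the one already present in $b_k$, is exactly why the hypothesis \eqref{e:max_term_cond} involves $|B'(z_k)|^{-2}$, and the entire function $F$ enters only through its maximal term (supplying the exponents $s_k$ and the convergence estimate), not as a multiplicative factor of $f$.

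A second concrete defect is your proposed construction of $g$: an expression of the form $F\bigl(\tfrac{c}{1-|z|}\cdot(\cdot)\bigr)$ is not an analytic function of $z$, since $|z|$ is not holomorphic, and no ``smoothing'' repairs this while keeping $g$ analytic on all of $\D$. You flag this as ``the delicate part,'' but it is not a technicality to be smoothed over --- it is the reason the construction must go through the interpolation series rather than through a direct substitution of $F$. Your growth estimate in step (5) is likewise circular as stated: the bound $M(r,f)\le M(r,g)$ is fine, but $M(r,g)=M(r,e^{G})$ is controlled by the estimate of the interpolating series (which produces the $(1-r)^{-5}$ and the $\mu(\tfrac{2}{1-r},F)$ factors), not by a comparison of $M(r,F)$ with $\mu(r,F)$. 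Your step (1), identifying $|B'(z_k)|$ with $(1-|z_k|^2)^{-1}\prod_{j\ne k}\sigma(z_j,z_k)$, is correct and is indeed how one sees what \eqref{e:max_term_cond} is really demanding, but the remainder of the argument needs to be rebuilt around the interpolation condition above.
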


Unfortunately, it is hard to check  \eqref{e:max_term_cond} for a given sequence.
On the other hand, additional restrictions on the sequence allow to obtain explicit growth estimates of the coefficient $a$  or a solution $f$. In particular, in \cite{Heit_cmft} it is proved that {\sl if $(z_k)$ is a Blaschke sequence satisfying \eqref{e:uniform_separ}, then there exists an $a(z)$ analytic in $\D$ such that \eqref{e:oscil_eq} possesses a bounded analytic solution with zero sequence $(z_k)$.}

Recently J. Gr\" ohn and J. Heittokangas have  also proved the following theorem.

\begin{theorem}[{\cite[Theorem 6(a)]{Gr_Heit}}]
Let $(z_k)$ be a non-zero sequence in $\D$ such that
\begin{equation*}
 \inf_{k\in \mathbb{N}} (1-|z_k|)^{-q} \prod_{j\ne k} \sigma(z_j, z_k)>0,
\end{equation*}
for some $q>0$, and $\sum_{k=1}^\infty (1-|z_k|)^\alpha<\infty$  for some $\alpha \in (0,1]$.
Then there exists a function $a\in A^{-2(1+\alpha+2q)}$ satisfying $\limsup_{|z|\uparrow1} |a(z)| (1-|z|^2)^2>1$ such that \eqref{e:oscil_eq} possesses a solution whose zero sequence  is $(z_k)$.
\end{theorem}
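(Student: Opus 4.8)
The plan is to produce the solution explicitly in the form $f=Be^{u}$, where $B$ is the Blaschke product with zero set $(z_k)$ (a Blaschke sequence, since $\al\le1$ forces $\sum_k(1-|z_k|)\le\sum_k(1-|z_k|)^{\al}<\ity$) and $u$ is an analytic function chosen so that $a:=-f''/f$ has only removable singularities at the $z_k$. Because the zeros of every nontrivial solution of \eqref{e:oscil_eq} with analytic $a$ are simple (uniqueness for the Cauchy problem), it is enough to produce such an $f$ and set $a=-f''/f$. From
\[
\frac{f''}{f}=\frac{B''}{B}+2\frac{B'}{B}u'+(u')^2+u''
\]
and the expansion $B'/B=(z-z_k)^{-1}+a_k+O(z-z_k)$ with $a_k=\tfrac12 B''(z_k)/B'(z_k)$, one sees that $f''/f$ has at $z_k$ a simple pole of residue $2a_k+2u'(z_k)$, removable precisely when
\[
u'(z_k)=-a_k,\qquad k\in\N.
\]
Thus everything reduces to solving this interpolation problem for the analytic function $v:=u'$, with a growth bound strong enough to control $a$.

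Next I would estimate the data $a_k$. Writing $B=b_{z_k}\prod_{j\ne k}b_{z_j}$ (with $b_w$ the Blaschke factor having zero $w$), one gets $B''(z_k)/B'(z_k)=b_{z_k}''(z_k)/b_{z_k}'(z_k)+2\sum_{j\ne k}b_{z_j}'(z_k)/b_{z_j}(z_k)$; the first term is $2\bar z_k/(1-|z_k|^2)=O((1-|z_k|)^{-1})$, and the sum is at most $(1-|z_k|^2)^{-1}\sum_{j\ne k}(1-\si(z_j,z_k)^2)/\si(z_j,z_k)$. Using the identity $|B'(z_k)|(1-|z_k|^2)=\prod_{j\ne k}\si(z_j,z_k)$ together with the hypothesis $\prod_{j\ne k}\si(z_j,z_k)\ge c_0(1-|z_k|)^{q}$ one bounds the last sum: the part over $\si(z_j,z_k)\ge\tfrac12$ is $\lesssim-\log\prod_{j\ne k}\si(z_j,z_k)\lesssim\log\frac{e}{1-|z_k|}$, while the part over $\si(z_j,z_k)<\tfrac12$ has $\lesssim\log\frac{e}{1-|z_k|}$ summands, each $\le c_0^{-1}(1-|z_k|)^{-q}$ (every individual factor is $\ge c_0(1-|z_k|)^{q}$). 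Hence $|a_k|\le C(1-|z_k|)^{-1-q}\log\frac{e}{1-|z_k|}$; note also that the hypothesis yields the lower bound $|B'(z_k)|\ge\tfrac{c_0}{2}(1-|z_k|)^{q-1}$.

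The heart of the argument is to solve $v(z_k)=-a_k$ explicitly with $v$ of polynomial growth. I would take, for a large parameter $N$ (which $\sum_k(1-|z_k|)^{\al}<\ity$ forces to be chosen $\ge\al+2q$),
\[
v(z)=B(z)\sum_{k}\frac{-a_k}{(z-z_k)\,B'(z_k)}\Bl\frac{1-|z_k|^2}{1-\bar z_k z}\Br^{\!N},
\]
each summand being analytic in $\D$ and the $k$-th equal to $-a_k$ at $z_k$ while the rest vanish there. Using $|B(z)|/|z-z_k|\le|1-\bar z_k z|^{-1}$, the lower bound for $|B'(z_k)|$, and the estimate for $|a_k|$, one gets
\[
|v(z)|\le C\sum_k\frac{(1-|z_k|)^{N-2q}\log\frac{e}{1-|z_k|}}{|1-\bar z_k z|^{N+1}},
\]
and the standard estimates for such series (valid here since the weak separation makes $(z_k)$, up to logarithmic factors, relatively separated) give $v\in A^{-s}$ for every $s>1+2q$. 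Alternatively this interpolation step can be handled through Theorem~\ref{t:sufficient_cond_interpol} with a power weight, whose hypothesis \eqref{e:concentr_cond_eta} holds because the weak separation allows only $\lesssim\log\frac{e}{1-|z_k|}$ of the $z_j$ in a fixed pseudohyperbolic neighbourhood of $z_k$. This is the step I expect to be the main obstacle: $(z_k)$ need not be uniformly separated — the product condition only forces $\si(z_j,z_k)\gtrsim(1-|z_k|)^{q}$ — so powers of $(1-|z|)^{-1}$ are lost both in $|a_k|$ and in the summation, and the logarithmic factors must be carried through with care.

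Finally, put $u(z)=\int_0^{z}v$, $f=Be^{u}$, $a=-f''/f$; by construction $a$ is analytic. The formula
\[
a=-\Bl\frac{B''}{B}+2\frac{B'}{B}v+v^2+v'\Br
\]
does not involve $e^{u}$, so the (possibly fast) growth of $e^{u}$ is irrelevant. Away from the zeros each term is $O\bigl((1-|z|)^{-2(1+\al+2q)}\bigr)$, with $v^2$ dominating; near a zero the polar parts of $B''/B$ and $2(B'/B)v$ cancel, and one bounds $(B''+2B'v)/B$ by localising to a clump of $\lesssim\log\frac{e}{1-|z|}$ nearby zeros, estimating the resulting finite Blaschke product and the $O((1-|z|)^{-1-\al})$ logarithmic derivative of the remaining factor. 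This gives $a\in A^{-2(1+\al+2q)}$. The inequality $\varlimsup_{|z|\uparrow1}|a(z)|(1-|z|^2)^2>1$ then follows from oscillation theory: $f$ has infinitely many zeros, which accumulate only on $\partial\D$, so \eqref{e:oscil_eq} is oscillatory, and by the hyperbolic analogue of Nehari's disconjugacy criterion $(1-|z|^2)^2|a(z)|$ cannot ultimately stay below $1$ (if need be, this can be ensured by an innocuous normalisation of $f$).
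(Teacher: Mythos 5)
First, a point of order: the paper does not prove this theorem. It is quoted, with attribution, from Gr\"ohn and Heittokangas \cite{Gr_Heit} as background for Theorems \ref{t:zeros_de} and \ref{t:zeors_sharp}, so there is no in-paper proof to compare your attempt against. Your skeleton --- factor $f=Be^{u}$, observe that $a=-f''/f$ is analytic precisely when $u'(z_k)=-\tfrac12 B''(z_k)/B'(z_k)$, solve this interpolation problem for $v=u'$, and read off the growth of $a$ from $a=-(B''/B+2(B'/B)v+v^2+v')$ --- is exactly the scheme the paper itself uses for its Theorem \ref{t:zeros_de} (with a canonical product in place of $B$), and your residue computation, the identity $|B'(z_k)|(1-|z_k|^2)=\prod_{j\ne k}\sigma(z_j,z_k)$, and the explicit interpolating series are sound in outline.

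There are, however, concrete gaps in the quantitative part, which is the whole content of the theorem. (a) Your bound $|a_k|\le C(1-|z_k|)^{-1-q}\log\frac{e}{1-|z_k|}$ carries a spurious logarithm: you multiply the number of close points by the worst individual term, whereas under the constraints $\prod_{j}\sigma_j\ge c_0(1-|z_k|)^{q}$, $\sigma_j\le\tfrac12$, the sum $\sum_j\sigma_j^{-1}$ is maximized by a single close point and is at most $c_0^{-1}(1-|z_k|)^{-q}+2\log_2\frac{1}{c_0(1-|z_k|)^{q}}$, which gives $|a_k|\le C(1-|z_k|)^{-1-q}$ with no logarithm. This matters: with your version the natural estimate $|v(z)|(1-|z|)^{1+2q+\al}\lesssim\sum_k(1-|z_k|)^{\al}\log\frac{e}{1-|z_k|}$ has a possibly divergent right-hand side, so you do not reach $v\in A^{-(1+\al+2q)}$, which is what $v^2\in A^{-2(1+\al+2q)}$ requires; and your stronger parenthetical claim that $v\in A^{-s}$ for every $s>1+2q$ is unsupported, since in the only available summation device, $|1-\bar z_kz|^{N+1}\ge(1-|z_k|)^{N-2q-\al}(1-|z|)^{1+2q+\al}$, the exponent $\al$ enters precisely because the sole global hypothesis on the sequence is $\sum_k(1-|z_k|)^{\al}<\ity$. (b) The bound on $a$ near the zeros, where the poles of $B''/B$ and $2(B'/B)v$ cancel but each term separately blows up, is the technically decisive step for an $A^{-n}$ conclusion and is only gestured at; note that the device the paper uses at the corresponding point of Theorem \ref{t:zeros_de} (logarithmic-derivative estimates outside an exceptional set plus monotonicity of $M(r,a)$) only yields bounds of the form $\ln M(r,a)\le C\tilde\psi(\frac{1}{1-r})$ and cannot produce a polynomial bound. (c) The Nehari--Schwarz argument gives failure of disconjugacy, i.e. $\sup_{z\in\D}|a(z)|(1-|z|^2)^2>1$, or $\limsup_{|z|\uparrow1}|a(z)|(1-|z|^2)^2\ge1$ from the non-oscillation criterion; the strict inequality for the $\limsup$ requires a further argument, and the proposed ``normalisation of $f$'' cannot supply it, since $a=-f''/f$ is unchanged when $f$ is multiplied by a constant.
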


Our result (Theorem \ref{t:zeros_de}) complements those of Gr\"ohn and Heittokangas in the case when a zero sequence need not to satisfy the Blaschke condition, but still has a finite exponent of convergence. We also show (Theorem \ref{t:zeors_sharp}) that our estimate of the growth of the coefficient $a$ is sharp in some sense.

\subsection{Main results}
Let  $\psi\colon [1,+\infty)\to \mathbb{R}_+$ be a nondecreasing function. We define
$$ \tilde \psi(x)=\int_1^x \frac {\psi(t)}t dt.$$

Let, in addition, $\psi$ have  finite order in the sense of P\'olya, i.e.
\begin{equation}\label{e:polya}
\psi(2x)=O(\psi(x)), \quad x\to+\infty.
\end{equation}
\begin{rem}    Polya's order  $\rho^*[\psi]$   of $\psi$ (\cite{DrSh}) is characterized by the condition that for any $\rho>\rho^*[\psi]$, we have
\begin{equation}\label{e:psi_con}
 \psi(Cx)\le C^\rho \psi(x), \quad x, C\to \infty.
\end{equation}
Note that  \eqref{e:psi_con} implies
\begin{equation*}\label{e:til_psi_con}
\tilde \psi(Cx)\le C^\rho \tilde\psi(x) +\tilde\psi(C)\le    2C^\rho \tilde \psi(x), \quad x, C\to \infty.
\end{equation*}
so $\rho^*[\tilde \psi]\le \rho^*[\psi]$.

Also, it is known that \eqref{e:psi_con} implies that $\psi$ has finite order of growth, i.e. $\psi(x)=O(x^\rho)$, $x\to+\infty$, but not vise versa.
\end{rem}

\begin{rem} In the case $\psi(x)=\ln ^p x$, $p\ge 0$ we get $\tilde \psi(x)=\frac 1{p+1} \ln ^{p+1} x$, and in
the case $\psi(x)=x^\rho $, $\rho>0$ we have $\tilde \psi(x)=\frac 1{\rho} (x^{\rho} -1)$.
\end{rem}

The following theorem gives sufficient conditions for interpolation sequences in classes of analytic functions of moderate growth in the unit disc.

\begin{theorem} \label{t:sufficient_cond_interpol} Let  $(z_n)$ be a sequence of distinct complex numbers in $\mathbb{D}$.
Assume that for some nondecreasing unbounded function  $\psi\colon [1,+\infty)\to \mathbb{R}_+ $ satisfying \eqref{e:polya} we have \begin{equation}\label{e:concentr_cond_psi}
 \exists C>0:\    \ \forall n\in \mathbb{N}   \quad N_{z_n} \Bigl(\frac{1-|z_n|}2 \Bigr)\le C  \psi\Bl \frac 1{1-|z_n|}\Br
 \end{equation}
Then for any sequence  $(b_n)$ satisfying
 \begin{equation*}\label{e:b_n_con_psi}
 \exists C>0:\    { \ln |b_n|} \le C {\tilde \psi\Bl \frac {1}{1-|z_n|}}\Br , \quad n\in \mathbb{N}
     \end{equation*}
there exists an analytic function $f$ in  $\mathbb{D}$ with the property \eqref{e:inter_prob}
and
 \begin{equation}\label{e:growth_con_psi}
 \exists C>0:\ \ln M(r,f)\le  C \tilde\psi \Bl \frac {1}{1-r}\Br. \end{equation}
 \end{theorem}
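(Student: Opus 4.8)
The plan is to derive from the local hypothesis \eqref{e:concentr_cond_psi} a global estimate for the zero counting function, and then to write down an explicit $f=g\cdot S$, where $g$ is a canonical product vanishing exactly on $(z_n)$ and $S$ is a Mittag--Leffler type series whose convergence factors are calibrated to $\tilde\psi$. \emph{Step 1.} First, \eqref{e:concentr_cond_psi} rules out accumulation points of $(z_n)$ in $\D$: if $z_{n_k}\to\zeta\in\D$ then $n_{z_{n_k}}(t)=\infty$ for every $t<\tfrac12(1-|\zeta|)$, making the left-hand side of \eqref{e:concentr_cond_psi} infinite; discarding the trivial case of a finite sequence we may assume $|z_n|\uparrow1$. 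Since $(n_{z_n}(\tau)-1)^+\log\tfrac{1-|z_n|}{2\tau}\le N_{z_n}\bigl(\tfrac{1-|z_n|}2\bigr)$, each disc $U\bigl(z_n,\tfrac14(1-|z_n|)\bigr)$ contains at most $1+C\psi\bigl(\tfrac1{1-|z_n|}\bigr)$ of the $z_j$; covering the dyadic annulus $\{1-2^{-j}\le|z|<1-2^{-j-1}\}$ by $O(2^j)$ such discs gives $\#\{n:1-|z_n|\asymp2^{-j}\}\le C2^j\psi(2^j)$, hence $n(t):=\#\{n:|z_n|\le t\}\le\tfrac{C}{1-t}\psi\bigl(\tfrac1{1-t}\bigr)$, and integrating (using \eqref{e:polya}) $N(r):=\int_0^r\tfrac{n(t)}t\,dt\le C\tilde\psi\bigl(\tfrac1{1-r}\bigr)$. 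Integration by parts in $N_{z_n}$ also gives the \emph{linear} bound $\sum_{j\ne n,\,|z_j-z_n|<\frac12(1-|z_n|)}\log\tfrac{1-|z_n|}{|z_j-z_n|}\le C\psi\bigl(\tfrac1{1-|z_n|}\bigr)$.

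\emph{Step 2 (the main technical point).} We construct an analytic $g$ in $\D$ with simple zeros exactly at $(z_n)$ such that $\ln M(r,g)\le C\tilde\psi\bigl(\tfrac1{1-r}\bigr)$ and $\ln\tfrac1{|g'(z_n)|}\le C\tilde\psi\bigl(\tfrac1{1-|z_n|}\bigr)$, taking for $g$ a product of elementary factors adapted to the disc (a Blaschke factor times an exponential correction chosen so the product converges, which works because $N(r)$ has finite order by \eqref{e:polya}). Normalising $g(0)=1$, the mean of $\log|g|$ over $|z|=r$ equals $N(r)$ by Jensen's formula; the bound for $\ln M(r,g)$ then follows by splitting the zeros into those pseudohyperbolically close to, and those far from, the circle $|z|=r$, the condition \eqref{e:concentr_cond_psi} ensuring $|g|$ does not exceed its mean by more than a factor $e^{C\tilde\psi}$. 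The lower bound for $|g'(z_n)|$ is obtained by writing $g'(z_n)=\prod_{j\ne n}(\cdots)$ and separating the finitely many nearby factors, estimated via the linear bound of Step 1, from the rest, estimated via $N(\,\cdot\,)$. I expect this step — getting the sharp $\ln M(r,g)$ bound and, since no separation of $(z_n)$ is assumed, the matching lower bound for $|g'(z_n)|$ — to be the main obstacle.

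\emph{Step 3.} Put
\[
 f(z)=g(z)\sum_n\frac{b_n}{g'(z_n)(z-z_n)}\left(\frac{1-|z_n|^2}{1-\bar z_nz}\right)^{N_n},\qquad
 N_n=\bigl\lceil K\,\tilde\psi\bigl(\tfrac1{1-|z_n|}\bigr)\bigr\rceil,
\]
with $K$ large in terms of the constants in the hypotheses and the order of $\psi$. Each summand times $g$ is analytic in $\D$, the $n$-th one tending to $b_n$ as $z\to z_n$ while all the others vanish there, so \eqref{e:inter_prob} holds once the series converges locally uniformly. On $|z|=r$ one has $|1-\bar z_nz|\ge1-|z_n|r$, so the bracket is $\le2$ always and $\le\tfrac{2(1-|z_n|)}{1-r}$ when $1-|z_n|<\tfrac{1-r}2$; also $\bigl|\tfrac{g(z)}{z-z_n}\bigr|\le\tfrac{4}{1-r}M\bigl(\tfrac{1+r}2,g\bigr)$ there, by the maximum principle. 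Using this with the two estimates of Step 2 and \eqref{e:polya}: for $z_n$ with $1-|z_n|\ge c(1-r)$ (in particular those inside $|z|=r$) each term is $\le e^{C\tilde\psi(1/(1-r))}$ and, by Step 1, there are at most $\tfrac{C}{1-r}\psi\bigl(\tfrac1{1-r}\bigr)\le e^{C\tilde\psi(1/(1-r))}$ of them; for $z_n$ with $1-|z_n|$ much smaller than $1-r$ the factor $\bigl(\tfrac{2(1-|z_n|)}{1-r}\bigr)^{N_n}$ decays geometrically and, as $N_n\asymp\tilde\psi\bigl(\tfrac1{1-|z_n|}\bigr)$ with $K$ large, dominates the factor $e^{C\tilde\psi(1/(1-|z_n|))}$ coming from $|b_n|$, $1/|g'(z_n)|$ and $M\bigl(\tfrac{1+r}2,g\bigr)$, leaving a tail that is summable via the dyadic band count of Step 1 and bounded by $e^{C\tilde\psi(1/(1-r))}$. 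Summing the two parts yields \eqref{e:growth_con_psi}, and running the same estimates with $r$ replaced by any fixed $\rho<1$ (reducing to the circle $|z|=\rho$ by the maximum principle) gives the local uniform convergence used above. The only subtlety here is the choice of the $N_n$, which must be large enough for convergence yet $\asymp\tilde\psi$ so as not to spoil the growth of $f$; this is routine once $g$ is in hand.
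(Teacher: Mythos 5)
Your overall architecture coincides with the paper's: a Tsuji-type canonical product $P(z)=\prod_n E(A_n(z),s)$ with $A_n(z)=\frac{1-|z_n|^2}{1-\bar z_n z}$, a two-sided estimate of $|P'(z_n)|$, and a Mittag--Leffler series $\sum_n \frac{b_n}{P'(z_n)}\frac{P(z)}{z-z_n}\bigl(\frac{1-|z_n|^2}{1-\bar z_n z}\bigr)^{N_n}$. Your Step 1 (dyadic counting, $n(t)\le \frac{C}{1-t}\psi(\frac1{1-t})$, and the passage from $N_{z_n}$ to the linear sum) is correct, and your Step 3 is sound: the two-case maximum-principle bound for $|g(z)/(z-z_n)|$ works, and your direct choice $N_n\asymp K\tilde\psi\bigl(\frac1{1-|z_n|}\bigr)$ does force convergence --- for $1-|z_n|\le c(K)(1-r)$ the factor $\bigl(\frac{2(1-|z_n|)}{1-r}\bigr)^{N_n}$ beats $e^{C\tilde\psi(1/(1-|z_n|))}$ and leaves a tail summable against the dyadic band count, while the finitely many remaining terms each cost only $e^{CK\tilde\psi(1/(1-r))}$. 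This is a genuine and arguably cleaner alternative to the paper's route, which calibrates the exponents $s_n$ through Clunie--K\"ovari's theorem and the Newton majorant of an entire function $\Phi$ with $\ln M(t,\Phi)\sim C_0\tilde\psi(C_0t)$, comparing the bracket to a ratio of maximal terms $\mu(\frac2{1-|z|},\hat\Phi)/\mu(\frac1{1-|z_n|},\hat\Phi)$.

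The gap is exactly where you flag it: Step 2 is asserted, not proved, and it is the technical heart of the theorem (the paper devotes its two main lemmas to it). For the upper bound $\ln M(r,P)\le C\tilde\psi(\frac1{1-r})$, your Jensen's-formula heuristic --- ``the mean of $\log|g|$ is $N(r)$ and the max exceeds the mean by at most $e^{C\tilde\psi}$'' --- is not a viable argument as stated: for a function with zeros there is no general mean-to-max passage, and the actual proof (Lemma \ref{l:lin_can}) instead uses Tsuji's termwise inequality $\log|P(z)|\le 2^{s+2}\sum_n|A_n(z)|^{s+1}$ and then bounds $\sum_n|A_n(z)|^{s+1}$ by $C\tilde\psi(\frac1{1-|z|})$ through a Linden-style decomposition into boxes $S_{h,k}$, summing separately over the annuli $2^{-k-1}<1-|z_n|\le 2^{-k}$ with $k>\nu$ (giving $C\psi(\frac1{1-r})$ via \eqref{e:polya}) and $k\le\nu$ (giving $C\tilde\psi(\frac1{1-r})$, which is where $\tilde\psi$ rather than $\psi$ necessarily appears). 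For the lower bound on $|P'(z_n)|$, your sketch of ``near factors via the linear bound, far factors via $N(\cdot)$'' is the right idea, but the execution (Lemma \ref{l:index}) requires showing $\bigl|\ln|B_k(z_k)|+N_{z_k}(\delta(1-|z_k|))\bigr|\le C\sum_n|A_n(z_k)|^{s+1}$, which needs Tsuji's estimate \eqref{e:in8} for the factors with $|A_n(z_k)|<\frac12$, a separate argument for the factors with $|A_n(z_k)|\ge\frac12$ but $|z_n-z_k|>\delta(1-|z_k|)$ (using $\sigma(z_n,z_k)\ge\frac{\delta}{2(2+\delta)}$ there), and the cancellation between $\ln\frac{\eta_k|z_n|}{|1-\bar z_n z_k|}$ and the counting term for the nearby zeros. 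Until these two estimates are actually established, the construction in Step 3 has nothing to stand on.
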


In order to prove a criterion we define a class of `regularly' growing functions. The class $\mathcal{R}$ consists of functions $\psi \colon [1,+\infty)\to \mathbb{R}_+$ which are nondecreasing, and  such that
$\tilde \psi(r)=O(\psi(r))$ as $r\to+\infty$. We note that the power function $x^\rho$, $\rho>0$, belongs to $\mathcal{{R}}$. Also, given a positive nondecreasing function $\psi$, if for some $C>1$ there exists $t_0$ such that
$\psi(Ct)\ge 2 \psi (t)$ for all $t> t_0$, then $\psi \in \mathcal{{R}}$ (see \cite[p.50-51]{Shkal}).

Combining Theorem \ref{t:sufficient_cond_interpol} with the aforementioned result from \cite{VSh04} we are able to prove the following criterion, which essentially coincides with a result  from \cite{HarMas}.
%

\begin{theorem} \label{t:interpol_criter} Let  $(z_n)$ be a sequence of distinct complex number in  $\mathbb{D}$, and let $\psi \in \mathcal{R}$ satisfy \eqref{e:polya}.
The following  conditions are equivalent:
\begin{itemize}
  \item [(i)]  $\forall (b_n)$ such that  \begin{equation*}\label{e:b_n_con}
   \ \exists C>0: \   \ln |b_n| \le  {C} \psi\Bl \frac{1}{1-|z_n|}\Br, \quad n\in \mathbb{N}
     \end{equation*}
there exists an analytic function $f$ in $\mathbb{D}$ satisfying  \eqref{e:inter_prob}
and

 \begin{equation*}\label{e:growth_con}
\exists C>0:  \ln M(r,f)\le  C\psi \Bl \frac{1}{1-r}\Br; \end{equation*}

  \item [(ii)] condition \eqref{e:concentr_cond_psi} holds;
  \item [(iii)] \begin{equation*} \exists C>0  \ \forall n\in \mathbb{N} \sum\limits_{0< |z_n- z_j|<\frac12 (1-|z_n|)}\ln \frac 1{\sigma(z_n, z_j)}\le {C}\psi \Bl \frac{1}{1-|z_n|}\Br. \label{e:korenb_dens}\end{equation*}
\end{itemize}
 \end{theorem}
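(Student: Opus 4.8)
\noindent\emph{Proof strategy.} The plan is to establish the two equivalences (i)$\Leftrightarrow$(ii) and (ii)$\Leftrightarrow$(iii); the substantive analytic input is already available --- Theorem~\ref{t:sufficient_cond_interpol} for the sufficiency half, and the necessity statement of \cite{VSh04} recalled above for the other half --- so the remaining work is to reconcile the growth functions and the geometric constants. The one preliminary observation is that $\psi\asymp\tilde\psi$ on $[1,+\infty)$ whenever $\psi\in\mathcal R$ satisfies \eqref{e:polya}: indeed $\tilde\psi(x)\ge(\log2)\,\psi(x/2)\gtrsim\psi(x)$ by monotonicity and \eqref{e:polya}, while $\tilde\psi(x)=O(\psi(x))$ is the defining property of $\mathcal R$. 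In particular $\psi$ is unbounded; and, writing $s=\log x$, the function $s\mapsto\tilde\psi(e^s)$ is the primitive of the nondecreasing function $s\mapsto\psi(e^s)$, so $\tilde\psi$ is increasing, convex in $\log x$, and satisfies $\log x=o(\tilde\psi(x))$.

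For (ii)$\Rightarrow$(i), note that (ii) is precisely the hypothesis \eqref{e:concentr_cond_psi} of Theorem~\ref{t:sufficient_cond_interpol}, and that $\psi\asymp\tilde\psi$ turns its conclusion into the statement of (i): a datum with $\log|b_n|\le C\psi(\frac1{1-|z_n|})$ also obeys $\log|b_n|\le C'\tilde\psi(\frac1{1-|z_n|})$, and the resulting bound $\log M(r,f)\le C''\tilde\psi(\frac1{1-r})$ gives $\log M(r,f)\le C'''\psi(\frac1{1-r})$. For (i)$\Rightarrow$(ii), up to the value of the constant the class in (i) coincides with $\{f:\log M(r,f)\le C\tilde\psi(\frac1{1-r})\}$, which is of the form \eqref{e:eta_type} with the admissible growth function $\eta=\tilde\psi$; hence the necessity result of \cite{VSh04} applies and yields \eqref{e:concentr_cond_eta}, that is, $N_{z_n}(\delta(1-|z_n|))\le\tilde\psi(\frac C{1-|z_n|})$ for some fixed $\delta\in(0,1)$, and by \eqref{e:psi_con} applied to $\tilde\psi$ (cf.\ the Remark above) together with $\tilde\psi\asymp\psi$ the right-hand side is $\lesssim\psi(\frac1{1-|z_n|})$. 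If $\delta\ge\frac12$ this is already (ii); if $\delta<\frac12$ one passes to radius $\frac12(1-|z_n|)$ via the elementary inequality
\[
 N_{z_n}\Bl\tfrac12(1-|z_n|)\Br\le N_{z_n}\bigl(\delta(1-|z_n|)\bigr)+\log\tfrac1{2\delta}\cdot\bigl(n_{z_n}(\tfrac12(1-|z_n|))-1\bigr),
\]
bounding the counting function by covering the Euclidean disc $\{|\zeta-z_n|\le\frac12(1-|z_n|)\}$ with boundedly many discs of radius $\asymp\delta(1-|z_n|)$ centred at members of $(z_k)$ and using \eqref{e:concentr_cond_eta} at those centres, the inequality $n_w(t/2)-1\le(\log2)^{-1}N_w(t)$, $1-|w|\asymp1-|z_n|$, and \eqref{e:polya}; this produces $n_{z_n}(\frac12(1-|z_n|))\lesssim\psi(\frac1{1-|z_n|})$, hence (ii).

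The equivalence (ii)$\Leftrightarrow$(iii) is purely geometric. For $0<|z_n-z_j|<\frac12(1-|z_n|)$ one has $|1-\bar z_n z_j|\asymp1-|z_n|$ uniformly, so $\log\frac1{\sigma(z_n,z_j)}=\log\frac{1-|z_n|}{|z_n-z_j|}+O(1)$ with a uniform error term and positive left-hand side, while Jensen's formula gives $N_{z_n}(r)=\sum_{0<|z_j-z_n|\le r}\log\frac r{|z_j-z_n|}$. Hence both the quantity in (ii) and the quantity in (iii) equal $\sum_{0<|z_n-z_j|<\frac12(1-|z_n|)}\log\frac{1-|z_n|}{|z_n-z_j|}$ up to an additive error of size at most a constant times $n_{z_n}(\frac12(1-|z_n|))-1$; and under either condition the covering argument above (or a dyadic splitting of that sum) bounds this counting function by $\lesssim\psi(\frac1{1-|z_n|})$, so each of (ii), (iii) implies the other.

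The only genuinely delicate point I foresee is that the constant $\delta$ furnished by \cite{VSh04} is not at our disposal and may be smaller than $\frac12$, which is exactly what forces the covering estimate for $n_{z_n}(\frac12(1-|z_n|))$; everything else reduces to routine manipulation with \eqref{e:polya} and \eqref{e:psi_con}.
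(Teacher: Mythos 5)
Your proof is correct and follows essentially the same route as the paper: (ii)$\Rightarrow$(i) via Theorem \ref{t:sufficient_cond_interpol}, (i)$\Rightarrow$(ii) via the necessity theorem of \cite{VSh04}, and a direct termwise comparison of $\ln\frac1{\sigma(z_n,z_j)}$ with $\ln\frac{1-|z_n|}{|z_n-z_j|}$ for (ii)$\Leftrightarrow$(iii). The only differences are bookkeeping: where you run a covering argument to change the radius from $\delta(1-|z_n|)$ to $\frac12(1-|z_n|)$ and to control $n_{z_n}\bigl(\frac12(1-|z_n|)\bigr)$, the paper invokes Remark \ref{r:N_z} (i.e.\ \cite{Sh00}), and your explicit observation that $\psi\asymp\tilde\psi$ for $\psi\in\mathcal{R}$ satisfying \eqref{e:polya} spells out a reconciliation the paper leaves implicit.
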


\begin{rem} \label{r:N_z}As it was proved in \cite{Sh00},  condition \eqref{e:concentr_cond_psi} is equivalent to
that $$ \exists\delta_1\in(0;1)\ \exists C\in(0;1)\ \forall z
\in \mathbb{D}:\  N_z(\delta_1(1-|z|))\leq C\psi\Bl\frac1{1-|z|}
\Br.
$$
\end{rem}

\begin{rem} Repeating the arguments from the proof of the equivalence $(ii)\Leftrightarrow (iii)$ on p.\pageref{p:2iff3}, one can prove that  \eqref{e:concentr_cond_psi} holds if and only if
 \begin{equation*} \exists C>0  \ \forall z\in \mathbb{D} : \sum\limits_{0< |z- z_j|<\frac12 (1-|z|)}\ln \frac 1{\sigma(z, z_j)}\le {C}\psi \Bl \frac{1}{1-|z|}\Br. 
 \end{equation*}
\end{rem}


Next results concern oscillation of solutions of  equation \eqref{e:oscil_eq}.
\begin{theorem} \label{t:zeros_de}
 Let conditions of Theorem \ref{t:sufficient_cond_interpol} be satisfied. Then there exists an analytic function $a$ in $\D$ satisfying
\begin{equation*}
 \exists C>0: \ln M(r,a)\le C \tilde \psi \Bigl( \frac 1{1-r} \Bigr), \quad r\in (0,1)
\end{equation*}
such that \eqref{e:oscil_eq} possesses a solution $f$  having zeros  precisely  at the points $z_k$, $k\in \mathbb{N}$.
\end{theorem}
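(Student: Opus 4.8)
The plan is to realize $a$ as $a=-f''/f$ for an explicitly constructed analytic solution $f$ whose zero sequence is exactly $(z_k)$, and then to estimate $M(r,a)$. Let $\pi$ be a canonical product with simple zeros precisely at $(z_k)$, built as in the proof of Theorem~\ref{t:sufficient_cond_interpol}. Under hypothesis~\eqref{e:concentr_cond_psi} it satisfies $\ln M(r,\pi)\le C\tilde\psi\bigl(\frac1{1-r}\bigr)$, the lower bound $\ln|\pi'(z_k)|\ge -C\tilde\psi\bigl(\frac1{1-|z_k|}\bigr)$, and a minimum-modulus estimate bounding $\ln|\pi(w)|$ from below by $-C\tilde\psi\bigl(\frac1{1-|w|}\bigr)$ minus the penalty $\sum_{0<|w-z_j|<\frac12(1-|w|)}\ln\frac1{\sigma(w,z_j)}$; the last two facts are essentially restatements of~\eqref{e:concentr_cond_psi}, where one also uses that the P\'olya condition~\eqref{e:polya} forces $\psi(x)\le C\tilde\psi(x)$. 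I look for $f=\pi e^{g}$ with $g$ analytic in $\D$. A direct computation gives $f''/f=\dfrac{\pi''+2\pi'g'}{\pi}+(g')^2+g''$, so $a:=-f''/f$ is analytic in $\D$ exactly when the numerator $\pi''+2\pi'g'$ vanishes at every $z_k$, i.e. when
\begin{equation*}
 g'(z_k)=-\frac{\pi''(z_k)}{2\pi'(z_k)},\qquad k\in\mathbb{N}.
\end{equation*}

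To build such a $g$, I first check $\ln\bigl|\frac{\pi''(z_k)}{2\pi'(z_k)}\bigr|\le C\tilde\psi\bigl(\frac1{1-|z_k|}\bigr)$: the upper bound for $|\pi''(z_k)|$ comes from Cauchy's inequality on $|z-z_k|=\frac14(1-|z_k|)$ together with the growth of $\pi$ and~\eqref{e:polya}, and the lower bound for $|\pi'(z_k)|$ is one of the properties of $\pi$ listed above. Then Theorem~\ref{t:sufficient_cond_interpol}, applied to the data $b_k=-\pi''(z_k)/(2\pi'(z_k))$, produces an analytic $G$ in $\D$ with $G(z_k)=b_k$ and $\ln M(r,G)\le C\tilde\psi\bigl(\frac1{1-r}\bigr)$. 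Set $g(z)=\int_0^z G(\zeta)\,d\zeta$ and $f=\pi e^{g}$. Then $g'=G$, so $\ln M(r,g')\le C\tilde\psi\bigl(\frac1{1-r}\bigr)$, $\ln M(r,g)\le C\tilde\psi\bigl(\frac1{1-r}\bigr)$, and Cauchy's inequality gives $\ln M(r,g'')\le C\tilde\psi\bigl(\frac1{1-r}\bigr)$. The function $f$ is analytic in $\D$, its zeros are precisely the (simple) points $z_k$, $a=-f''/f$ is analytic in $\D$ by the choice of $g'(z_k)$, and $f$ solves $f''+af=0$; it remains to bound $M(r,a)$.

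Write $a=-\mu/\pi-(g')^2-g''$ with $\mu=\pi''+2\pi'g'$; the last two summands satisfy $\ln M(r,\cdot)\le C\tilde\psi\bigl(\frac1{1-r}\bigr)$, and $\ln M(r,\mu)\le C\tilde\psi\bigl(\frac1{1-r}\bigr)$ for the same reasons. Fix $z$ with $|z|=r$. If $\operatorname{dist}(z,\{z_k\})\ge\tau(1-r)$ for a small fixed $\tau$, the minimum-modulus estimate gives $\ln|\pi(z)|\ge -C\tilde\psi\bigl(\frac1{1-r}\bigr)$, hence $\ln|a(z)|\le C\tilde\psi\bigl(\frac1{1-r}\bigr)$. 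Otherwise, the equivalent form of~\eqref{e:concentr_cond_psi} (cf. Remark~\ref{r:N_z}) bounds the number of $z_k$ inside $\{|w-z|<(1-r)/2\}$ by $1+C\psi\bigl(\frac1{1-r}\bigr)$, so by the pigeonhole principle one obtains a circle $\Gamma=\{|w-z|=\rho\}$ with $\rho\asymp 1-r$ and $\operatorname{dist}(\Gamma,\{z_k\})\ge c(1-r)/\psi\bigl(\frac1{1-r}\bigr)$. Since $z$ is the centre of $\Gamma$ and $a$ is analytic, $|a(z)|\le\max_{w\in\Gamma}|a(w)|$; on $\Gamma$ one has $|\mu(w)|\le\exp\bigl(C\tilde\psi(\tfrac1{1-r})\bigr)$, while the penalty in the minimum-modulus bound is $\sum_{0<|w-z_j|<\frac12(1-|w|)}\ln\frac1{\sigma(w,z_j)}\le C\psi\bigl(\tfrac1{1-|w|}\bigr)+\ln\bigl(C\psi(\tfrac1{1-r})\bigr)\le C\tilde\psi\bigl(\tfrac1{1-r}\bigr)$ (using the equivalent form of~\eqref{e:concentr_cond_psi} and $\ln\psi(\tfrac1{1-r})=o(\tilde\psi(\tfrac1{1-r}))$), so $\ln|\pi(w)|\ge -C\tilde\psi\bigl(\tfrac1{1-r}\bigr)$ on $\Gamma$. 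Combining the two cases yields $\ln M(r,a)\le C\tilde\psi\bigl(\frac1{1-r}\bigr)$.

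The main obstacle is exactly this last minimum-modulus estimate on $\Gamma$: near a cluster of zeros $\pi$ can be extremely small, and the cluster may contain as many as $C\psi\bigl(\frac1{1-r}\bigr)$ points, so bounding each nearby elementary factor separately and multiplying would only give the far-too-weak exponent $\psi\log\psi$. What saves the argument is that~\eqref{e:concentr_cond_psi} controls the \emph{additive} quantity $\sum\log\frac1{\sigma(w,z_j)}$ over nearby zeros rather than their mere number, and this is precisely the quantity that governs $\log|\pi(w)|$ for $w$ lying off the zero set. The remaining points are routine: transferring~\eqref{e:concentr_cond_psi} between comparable scales by a covering argument, and the elementary inequality $\ln x=o(\tilde\psi(x))$, which renders all the $\ln\frac1{1-r}$ and $\ln\psi$ losses negligible against $\tilde\psi\bigl(\frac1{1-r}\bigr)$.
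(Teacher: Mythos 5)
Your construction and reduction are the same as the paper's: write $f=Pe^{g}$ with $P$ the canonical product \eqref{e:can_prod}, observe that $a=-f''/f$ is analytic iff $g'(z_k)=-P''(z_k)/(2P'(z_k))$, verify via Cauchy's inequality and the lower bound for $|P'(z_k)|$ (Lemma \ref{l:index} plus \eqref{e:p'est}) that these data satisfy the hypothesis of Theorem \ref{t:sufficient_cond_interpol}, and interpolate to get $g'$. Where you genuinely diverge is the final step, bounding $M(r,a)$, i.e.\ controlling $P''/P$ and $P'/P$. The paper imports logarithmic derivative estimates from \cite{ChyGuHe}, \cite{CHR1}, which give $|P''/P|,|P'/P|\le (1-|z|)^{-q}$ only outside an exceptional set $E_\delta$ of small linear measure, and then removes the exceptional set using monotonicity of $M(r,a)$, condition \eqref{e:polya} and Lemma 4.1 of \cite{CHR1}. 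You instead prove a pointwise bound everywhere: a minimum-modulus estimate for $P$ with the additive penalty $\sum_{0<|w-z_j|<\frac12(1-|w|)}\ln\frac1{\sigma(w,z_j)}$, a pigeonhole choice of a circle $\Gamma$ centred at $z$ of radius $\asymp 1-r$ staying at distance $\gtrsim (1-r)/\psi(\frac1{1-r})$ from the zeros, and the maximum principle for the analytic function $a$. This is more self-contained (no exceptional sets, no external citations) and your accounting of the nearest-zero term $\ln(C\psi)$ and of $\psi\le C\tilde\psi$ is right; the price is that the minimum-modulus estimate at \emph{arbitrary} points $w$ is not literally in the paper --- Lemma \ref{l:index} states it only at the points $z_k$ for $B_k$ --- so you are implicitly extending that lemma. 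The extension does go through by the same Tsuji-type splitting into factors with $|w-z_n|\le\delta(1-|w|)$ and the rest, together with \eqref{e:nu_est} and Remark \ref{r:N_z}, but in a complete write-up you would need to state and prove it rather than call it ``essentially a restatement of \eqref{e:concentr_cond_psi}''. Both routes deliver the same bound $\ln M(r,a)\le C\tilde\psi\bigl(\frac1{1-r}\bigr)$.
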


\begin{cor}
 If for some $\rho>0$ a sequence $(z_k)$ satisfies the condition
\begin{equation*}
 \exists C>0: N_{z_k} \Bigl(  \frac{1-|z_k|}{2}\Bigr) \le C \Bigl( \frac{1}{1-|z_k|}\Bigr)^\rho,
\end{equation*}
then there exists a function $a$ analytic in $\D$ satisfying $\ln M(r,a)=O((1-r)^{-\rho})$, $r\in (0,1)$ such that
possesses a solution $f$  having zeros  precisely  at the points $z_k$, $k\in \mathbb{N}$.
\end{cor}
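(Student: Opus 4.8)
The plan is to derive the Corollary directly from Theorem~\ref{t:zeros_de} by verifying that the power function is an admissible choice of $\psi$. First I would set $\psi(x)=x^\rho$ for the given $\rho>0$. This $\psi$ is nondecreasing and unbounded on $[1,+\infty)$, and it satisfies the P\'olya condition \eqref{e:polya} since $\psi(2x)=2^\rho\psi(x)$. By the second Remark following Theorem~\ref{t:interpol_criter} (or simply by direct integration) one has $\tilde\psi(x)=\int_1^x t^{\rho-1}\,dt=\frac1\rho(x^\rho-1)$, so that $\tilde\psi\bigl(\frac1{1-r}\bigr)=\frac1\rho\bigl((1-r)^{-\rho}-1\bigr)=O\bigl((1-r)^{-\rho}\bigr)$ as $r\uparrow1$; on a compact subinterval $[0,r_0]$ the bound is trivial since $\tilde\psi$ is bounded there.

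Next I would observe that the hypothesis of the Corollary, namely
$$
\exists C>0:\ N_{z_k}\Bigl(\frac{1-|z_k|}2\Bigr)\le C\Bigl(\frac1{1-|z_k|}\Bigr)^\rho,\qquad k\in\mathbb N,
$$
is precisely condition \eqref{e:concentr_cond_psi} for this choice of $\psi$. Hence the hypotheses of Theorem~\ref{t:sufficient_cond_interpol} are met, and therefore so are the hypotheses of Theorem~\ref{t:zeros_de}. Applying Theorem~\ref{t:zeros_de}, we obtain an analytic function $a$ in $\D$ with
$$
\ln M(r,a)\le C\,\tilde\psi\Bigl(\frac1{1-r}\Bigr)\le C'\,(1-r)^{-\rho},\qquad r\in(0,1),
$$
that is, $\ln M(r,a)=O\bigl((1-r)^{-\rho}\bigr)$, such that \eqref{e:oscil_eq} has a solution $f$ whose zero set is exactly $(z_k)$. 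This is the assertion of the Corollary. (I would also silently correct the evident typo in the statement: ``such that possesses a solution'' should read ``such that \eqref{e:oscil_eq} possesses a solution''.)

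There is essentially no obstacle here: the Corollary is a specialization of Theorem~\ref{t:zeros_de} obtained by plugging in $\psi(x)=x^\rho$, and the only thing to check is the elementary computation of $\tilde\psi$ and the bookkeeping that $\tilde\psi(1/(1-r))$ is comparable to $(1-r)^{-\rho}$. The one place where a word of care is warranted is ensuring the estimate holds on all of $(0,1)$ rather than just near the boundary, which is immediate from the boundedness of $\tilde\psi$ on compact subsets of $[1,+\infty)$ together with the fact that $\ln M(r,a)$ is continuous (hence bounded) on $[0,r_0]$ for any $r_0<1$.
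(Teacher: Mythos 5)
Your proposal is correct and matches the paper's (implicit) proof exactly: the Corollary is stated as an immediate specialization of Theorem~\ref{t:zeros_de} with $\psi(x)=x^\rho$, using the computation $\tilde\psi(x)=\frac1\rho(x^\rho-1)$ already recorded in the paper's remarks. Your verification of the P\'olya condition and the identification of the hypothesis with \eqref{e:concentr_cond_psi} is precisely the required bookkeeping.
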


The following theorem is based on  an example due to J. Gr\"ohn and J. Heittokangas \cite{Gr_Heit}.
\begin{theorem} \label{t:zeors_sharp} For arbitrary $\rho>0$ there exists a sequence of distinct numbers $\{z_n\}$ in $\D$ with the following properties:
\begin{itemize}
 \item [i)] $N_{z_k} \Bigl(  \frac{1-|z_k|}{2}\Bigr) \le C \Bigl( \frac{1}{1-|z_k|}\Bigr)^\rho$, $k\in \mathbb{N}$;
\item [ii)] $(z_k)$ cannot be the zero sequence of a solution of \eqref{e:oscil_eq}, where $\ln M(r,a)=O((1-r)^{-\rho+\varepsilon_0})$ for  any $\varepsilon_0>0$.
\end{itemize}

\end{theorem}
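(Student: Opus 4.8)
To prove Theorem~\ref{t:zeors_sharp} the plan is to construct $(z_n)$ as a sparse union of extremely tight pairs of points and to exploit a Lyapunov‑type inequality that forces $|a|$ to be huge near such a pair. The auxiliary inequality I would establish first is: \emph{if $f\not\equiv0$ is analytic in $\D$, $f''+af=0$, and $f(w_1)=f(w_2)=0$ with $w_1\ne w_2$, then $\max_{\zeta\in[w_1,w_2]}|a(\zeta)|\ge\pi^2|w_1-w_2|^{-2}$.} Indeed, $[w_1,w_2]\subset\D$, so $\vfi(t):=f(w_1+t(w_2-w_1))$ is smooth on $[0,1]$, satisfies $\vfi(0)=\vfi(1)=0$ and $\vfi\not\equiv0$ (otherwise $f$ vanishes on a segment, hence identically), and $\vfi''(t)=-(w_2-w_1)^2a(w_1+t(w_2-w_1))\vfi(t)$. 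Integration by parts gives $\int_0^1|\vfi'|^2\,dt=-\int_0^1\vfi''\overline{\vfi}\,dt=(w_2-w_1)^2\int_0^1 a(w_1+t(w_2-w_1))|\vfi|^2\,dt$, so $\int_0^1|\vfi'|^2\,dt\le|w_1-w_2|^2\big(\max_{[w_1,w_2]}|a|\big)\int_0^1|\vfi|^2\,dt$, while applying Wirtinger's inequality to $\Re\vfi$ and $\Im\vfi$ gives $\int_0^1|\vfi'|^2\,dt\ge\pi^2\int_0^1|\vfi|^2\,dt$; since $\int_0^1|\vfi|^2\,dt>0$, the claim follows. (Versions of this inequality underlie the estimates in \cite{Gr_Heit}.)

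Next I would fix radii $r_j\uparrow1$ with $1-r_{j+1}=\tfrac14(1-r_j)$, set $d_j=\exp\{-(1-r_j)^{-\rho}\}$ (so $d_j<\tfrac12(1-r_j)$ for large $j$), choose on each circle $|z|=r_j$ two points $w_j,w_j'$ with $|w_j-w_j'|=d_j$, and let $(z_n)$ enumerate $\bigcup_j\{w_j,w_j'\}$. These points are distinct and accumulate only on $\partial\D$. Since $r_{j+1}-r_j=\tfrac34(1-r_j)$ and $r_j-r_{j-1}=3(1-r_j)$, the disc $U(w_j,\tfrac12(1-r_j))$ contains no point of $(z_n)$ other than $w_j,w_j'$, so $n_{w_j}(t)=1$ for $t<d_j$ and $n_{w_j}(t)=2$ for $d_j\le t\le\tfrac12(1-r_j)$; hence
$$N_{w_j}\Big(\tfrac{1-|w_j|}2\Big)=\int_{d_j}^{(1-r_j)/2}\frac{dt}t=\ln\frac1{d_j}+\ln\frac{1-r_j}2<(1-r_j)^{-\rho}=\Big(\frac1{1-|w_j|}\Big)^{\!\rho},$$
and the same estimate holds at $w_j'$. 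This yields property i) with $C=1$.

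For property ii), fix $\ve_0>0$ and suppose, towards a contradiction, that $a$ is analytic in $\D$ with $\ln M(r,a)\le C(1-r)^{-\rho+\ve_0}$ and that \eqref{e:oscil_eq} possesses a solution $f\not\equiv0$ having zeros precisely at the points $z_n$. Applying the auxiliary inequality to $w_j,w_j'$ produces $\zeta_j\in[w_j,w_j']$ with $|a(\zeta_j)|\ge\pi^2 d_j^{-2}=\pi^2\exp\{2(1-r_j)^{-\rho}\}$, hence $\ln M(|\zeta_j|,a)\ge2(1-r_j)^{-\rho}+2\ln\pi$. Since $|\zeta_j-w_j|\le d_j<\tfrac12(1-r_j)$ for large $j$, we have $\tfrac12(1-r_j)\le1-|\zeta_j|\le2(1-r_j)$, so $(1-r_j)^{-\rho}\ge2^{-\rho}(1-|\zeta_j|)^{-\rho}$ and $\ln M(|\zeta_j|,a)\ge2^{1-\rho}(1-|\zeta_j|)^{-\rho}$. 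Comparing with the growth hypothesis at $r=|\zeta_j|$ and dividing by $(1-|\zeta_j|)^{-\rho}$ gives $2^{1-\rho}\le C(1-|\zeta_j|)^{\ve_0}\to0$ as $j\to\infty$, a contradiction. As $\ve_0>0$ was arbitrary, ii) follows.

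The substantive point is the auxiliary inequality of the first paragraph (the integration by parts combined with the complex form of Wirtinger's inequality), together with the elementary bookkeeping relating the Euclidean spacings $d_j$ to the normalization $1-|z|$ used in i) and ii); the remaining estimates are routine. I should also note that the sequence constructed above is in fact a Blaschke sequence of exponent of convergence $0$, so the exponent $-\rho$ in the corollary following Theorem~\ref{t:zeros_de} cannot be relaxed to $-\rho+\ve_0$ even for such thin sequences.
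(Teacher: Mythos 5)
Your proof is correct, and for part ii) it takes a genuinely different route from the paper. The underlying construction is essentially the same in both arguments: pairs of points at geometrically spaced radii whose mutual distance is $\exp\{-(1-r)^{-\rho}\}$, so that the counting function $N_{z_k}$ picks up exactly one logarithm of size $(1-r)^{-\rho}$; your verification of i) matches the paper's. The difference is in how the pair of near-coincident zeros is converted into a lower bound on $a$. The paper writes $f=Be^{g}$, uses the near-double zero to get the lower bound $|g'(z_{2n})|=|B''(z_{2n})/(2B'(z_{2n}))|\ge \frac15 e^{2^{n\rho}}$, then bounds $\ln\ln M(r,f)$ from the growth of $a$ via \cite{HKR}, estimates $\Re g$ through Lemma \ref{l:index} and the canonical-product bounds, and invokes Carath\'eodory's inequality to contradict the lower bound for $g'$. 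You instead prove a complex Lyapunov--Wirtinger inequality: restricting $f$ to the segment $[w_j,w_j']$, integrating by parts and applying Wirtinger's inequality to the real and imaginary parts yields $\max_{[w_j,w_j']}|a|\ge \pi^2|w_j-w_j'|^{-2}$, which immediately forces $\ln M(r_j,a)\ge 2(1-r_j)^{-\rho}+O(1)$ and contradicts the growth hypothesis. Your integration-by-parts identity, the positivity of $\int_0^1|\vfi|^2$, and the bookkeeping $1-r_j\le 1-|\zeta_j|\le 2(1-r_j)$ are all in order. What your approach buys is a short, self-contained, quantitative argument acting on $a$ directly, avoiding the factorization $f=Be^g$, the logarithmic-derivative machinery, and the growth theory of solutions; what the paper's approach buys is compatibility with the Bank--Laine/Gr\"ohn--Heittokangas framework it builds on (it reuses Lemma \ref{l:index} and produces explicit information about $g'$), at the price of a considerably longer chain of estimates. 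Your closing observation that the sequence is Blaschke with exponent of convergence $0$ is also correct and is a worthwhile remark on where the sharpness really comes from.
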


\section{Preliminaries}
\subsection{Some auxiliary results}
For  $s=[\rho]+1$, where $\rho=\rho^*[\psi]$, we consider a canonical product of the form
\begin{equation}\label{e:can_prod}
P(z)=P(z, Z,s)=\prod_{n=1}^\infty E\Bigl( A_n(z),s\Bigr),
\end{equation}
where  $E(w,0)=1-w$,  $E(w,s)=(1-w)\exp \{w+ w^2/2 +\dots+ w^s/s\}$, $ s\in \N$, and $A_n(z)=\dfrac{1-|z_n|^2}{1-\bar z_n z}$. This product is an analytic function in $\mathbb{D}$ with the zero sequence  $Z=(z_n)$ provided  $\sum\limits_{z_n\in Z} (1-|z_n|)^{s+1}<\infty$.

The following two lemmas  play a key role in the proofs of the theorems.
The first one is a generalization of Linden's lemma from \cite{L_rep}, which  gives an upper estimate of the canonical product
via $n_z\bigl( \frac{1-|z|}2\bigr)$.

\begin{lem}\label{l:lin_can} Let a sequence  $Z=(z_n)$ in the unit disc be such that
 \[ n_z\Bigl( \frac{1-|z|}2\Bigr)\le  \psi \Bl \frac 1{1-|z|} \Br, \quad z\in \mathbb{D}, \]
where the function  $\psi$ satisfies \eqref{e:polya}. Then for $s>\rho^*[\psi]$ the canonical product
  $P(z)=P(z,Z,s)$  of the form  \eqref{e:can_prod} admits the estimate
\begin{equation}\label{e:lem1}
\log |P(z)|\le 2^{s+2} \sum_{n=1}^\infty \Bm A_n(z)\Bm^{s+1} \le  {C} \tilde \psi\Bl \frac 1{1-|z|} \Br , \quad z\in \D,
\end{equation}
for some constant $C>0$.
\end{lem}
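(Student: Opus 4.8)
The plan is to estimate $\log|P(z)|$ pointwise by splitting the factors of the canonical product according to the size of $|A_n(z)|$ and then to convert the resulting sum into an integral against $n_z$. First I would record the elementary bound on the Weierstrass primary factors: there is a constant $c_s$ so that $\log|E(w,s)|\le c_s|w|^{s+1}$ whenever $|w|\le \tfrac12$, while for $|w|$ close to $1$ one only has $\log|E(w,s)|\le c_s'|w|^s$ or even a crude bound like $\log|E(w,s)|\le C$ after accounting for the $(1-w)$ factor; in our situation, however, the key point is that $|A_n(z)|=\frac{1-|z_n|^2}{|1-\bar z_n z|}$ is bounded (indeed $\le 2$), and more importantly $|A_n(z)|\le \tfrac12$ as soon as $z_n$ lies outside a fixed pseudohyperbolic neighbourhood of $z$. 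So the first inequality in \eqref{e:lem1}, $\log|P(z)|\le 2^{s+2}\sum_n |A_n(z)|^{s+1}$, follows by applying the small-$w$ estimate to the far factors and absorbing the finitely-many (uniformly in $z$, by the density hypothesis) near factors into the same sum at the cost of the constant $2^{s+2}$.

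The substance is the second inequality, $\sum_n |A_n(z)|^{s+1}\le C\,\tilde\psi\!\bigl(\tfrac1{1-|z|}\bigr)$. Here I would fix $z$, write $d=1-|z|$, and note the comparison $|1-\bar z_n z|\asymp \max\{d,\,|z-z_n|\}$ valid for all $z_n\in\D$ (a standard estimate), so that
$$
|A_n(z)|\le \frac{C(1-|z_n|)}{\max\{d,|z-z_n|\}}\le \frac{C\max\{d,|z-z_n|\}}{\max\{d,|z-z_n|\}}\cdot\frac{1-|z_n|}{\max\{d,|z-z_n|\}},
$$
and more usefully $1-|z_n|\le 1-|z|+|z-z_n|\le 2\max\{d,|z-z_n|\}$, hence $|A_n(z)|\le 2C$ always and $|A_n(z)|\le \frac{2Cd}{|z-z_n|}$ when $|z-z_n|\ge d$. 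Split the sum at $|z-z_n|=d$: the points with $|z-z_n|<d$ number at most $n_z(d)\le n_z\bigl(\tfrac{1-w}2\bigr)$-type quantity — more precisely I would use the hypothesis at the point $z$ (or at a point of $\D$ with comparable $1-|\cdot|$) to see that $n_z(d)\le C\psi(1/d)$ — and they contribute $\le (2C)^{s+1} n_z(d)\le C\psi(1/d)$. For the remaining points one writes
$$
\sum_{|z-z_n|\ge d}|A_n(z)|^{s+1}\le (2Cd)^{s+1}\sum_{|z-z_n|\ge d}\frac{1}{|z-z_n|^{s+1}} = (2Cd)^{s+1}\int_d^{2}\frac{dn_z(t)}{t^{s+1}},
$$
integrate by parts, and bound $n_z(t)\le C\psi(1/(1-w))$ for suitable $w$ with $1-w\asymp t$ when $t\ge d$ — again this is exactly where the density hypothesis enters, now applied at points near $z_n$ whose distance to the boundary is $\asymp t$. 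This produces $d^{s+1}\int_d^2 \psi(1/t)\,t^{-s-2}\,dt$ (plus boundary terms), and substituting $u=1/t$ turns this into $d^{s+1}\int_{1/2}^{1/d}\psi(u)u^{s}\,du$, which by the P\'olya condition \eqref{e:polya} and its consequence \eqref{e:psi_con} (with $s>\rho^*[\psi]$) is dominated by $d^{s+1}\cdot (1/d)^{s-\rho+1}\psi(1/d)\cdot(\text{something})$ — I would instead more cleanly compare directly to $\tilde\psi$: since $\psi(u)u^{s-1}$ grows, one gets $\int_{1/2}^{1/d}\psi(u)u^{s}\,du\lesssim (1/d)^{s+1}\tilde\psi(1/d)$ after using $\psi(u)\lesssim (1/d)^{\rho-\text{(exponent)}}\psi(...)$; the honest route is to show $d^{s+1}\int_d^2\psi(1/t)t^{-s-2}dt\lesssim \tilde\psi(1/d)=\int_1^{1/d}\frac{\psi(t)}{t}dt$ by the change of variables and the regularity \eqref{e:psi_con}.

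The main obstacle I anticipate is the bookkeeping in this last estimate: one must legitimately replace $n_z(t)$ by $C\psi(1/t)$ for the full range $t\in[d,2]$, which requires applying the hypothesis not at $z$ but at boundary-adjacent points whose modulus is tuned to $t$, together with the doubling/P\'olya regularity of $\psi$ to pass between $\psi(1/t)$ at nearby scales; and then one must check that the resulting integral $d^{s+1}\int_d^{2}\psi(1/t)t^{-s-2}\,dt$ is $O(\tilde\psi(1/d))$, which is precisely where $s>\rho^*[\psi]$ is used (it guarantees the integral converges as $d\to0$ at the right rate and is controlled by the primitive $\tilde\psi$ rather than by $\psi/d$ or worse). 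The separated-points case of Linden's original lemma is the model; the generalization here is that we only control $n_z$ rather than assuming uniform discreteness, so the near-diagonal term $n_z(d)\le C\psi(1/d)$ must be carried explicitly and absorbed, using $\psi(1/d)\le \tilde\psi(1/d)\cdot\text{(const)}$-type inequalities which hold up to the P\'olya regularity (note $\psi$ need not be $\le C\tilde\psi$ in general, so one instead keeps the $\psi(1/d)$ term and checks $\psi(1/d)=O(\tilde\psi(1/d))$ fails — hence one should rather bound $n_z(d)$ by the integral term itself, i.e. include a slightly larger annulus, which is a routine fix).
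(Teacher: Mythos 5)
Your plan for the second (and main) inequality contains two concrete errors that break the argument. First, the pointwise bound $|A_n(z)|\le \frac{2Cd}{|z-z_n|}$ for $|z-z_n|\ge d=1-|z|$ is false: the numerator of $A_n(z)$ is $1-|z_n|^2$, not $1-|z|$, and for points \emph{deeper} in the disc than $z$ one can have $1-|z_n|\asymp|z-z_n|\gg d$, whence $|A_n(z)|\asymp 1$ (take $z=1-d$ and $z_n=1-T$ real with $T\gg d$; then $A_n(z)\approx 2$). These deep points are exactly the ones responsible for the answer being $\tilde\psi$ rather than $\psi$: already for a separated sequence ($\psi\asymp\const$) one has $\sum_n|A_n(z)|^{s+1}\asymp\ln\frac1{1-|z|}=\tilde\psi\bigl(\frac1{1-|z|}\bigr)$, whereas your bound would yield $O(\psi)=O(1)$, which is impossible. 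Second, the replacement $n_z(t)\le C\psi(1/t)$ for $t\in[d,2]$ also fails: once $t>1-|z|$ the disc $U(z,t)$ reaches $\partial\D$, and covering it by the admissible discs $U(w,\frac{1-|w|}2)$ at depth $\epsilon\le t$ requires $\asymp t/\epsilon$ of them, each holding up to $\psi(1/\epsilon)$ points, so $n_z(t)$ is typically infinite (again already for a separated sequence). Hence the one-parameter Stieltjes integral $\int_d^2 t^{-s-1}\,dn_z(t)$ is not a usable object.

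The fix is to stratify the far points by a \emph{second} parameter, the depth $1-|z_n|\asymp 2^{-k}$, in addition to the angular (or metric) distance from $z$; the decay of $|A_n(z)|$ then comes from $|1-\bar z_n z|\ge\max\{1-|z|,\,1-|z_n|,\,|z-z_n|\}$ with numerator $1-|z_n|$. This is what the paper does: it counts points in dyadic Whitney-type boxes $S_{h,k}$, sums over the angular index $h$ to get a per-annulus bound $\asymp\psi(2^k)2^{-ks}(1-r+2^{-k})^{-s}$, and then splits the sum over $k$ at $2^{-\nu}\asymp 1-|z|$: the tail $k>\nu$ is $O(\psi(\frac1{1-r}))$ using $s>\rho^*[\psi]$, while the head $k\le\nu$ gives $\sum_{k\le\nu}\psi(2^k)\asymp\tilde\psi(\frac1{1-r})$, which is the dominant term your scheme misses. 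Your treatment of the first inequality is essentially fine in conclusion (the paper simply quotes Tsuji's theorem), though the near factors with $|A_n(z)|\ge\frac12$ are not finitely many uniformly in $z$ (e.g.\ $z_n=0$ is ``near'' every $z$); they are absorbed only because each satisfies $\log|E(A_n(z),s)|\le C(s)\le C(s)2^{s+1}|A_n(z)|^{s+1}$.
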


\begin{rem} \label{r:subprod} Note that any canonical subproduct of $P$ satisfies \eqref{e:lem1} with the same constant $C$. In fact, let $Z_1\subset Z$, and $P_1(z)=P(z, Z_1, s)$. Then
\begin{equation*}
\log |P_1(z)|\le 2^{s+2} \sum_{z_n\in Z_1} \Bm A_n(z)\Bm^{s+1}\le 2^{s+2} \sum_{n=1}^\infty  \Bm A_n(z) \Bm^{s+1}  \le  {C} \tilde \psi\Bl \frac 1{1-|z|} \Br .
\end{equation*}
\end{rem}
We write $B_k(z)=\dfrac {P(z)}{E(A_k(z),s)}$.

\begin{lem}  \label{l:index}
For an arbitrary  $\delta\in (0,1)$, any sequence $Z$ in $\D$ satisfying $\sum_{z_k\in Z} (1-|z_k|)^{s+1}< \infty$, $s\in \Z_+$  there exists a positive constant  $C(\delta, s)$
$$| \ln |B_k(z_k)| + N_{z_k} (\delta(1-|z_k|))|\le C(\delta, s)\sum_{n=1}^\infty |A_n(z_k)|^{s+1},\quad k\to+\infty.$$
\end{lem}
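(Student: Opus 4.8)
The plan is to compare $\ln|B_k(z_k)|$ term-by-term with $N_{z_k}(\delta(1-|z_k|))$, exploiting that $B_k$ is exactly the canonical product $P$ with the $k$-th factor removed, so $\ln|B_k(z_k)|=\sum_{n\ne k}\ln|E(A_n(z_k),s)|$. I would split the sum over $n\ne k$ into the ``near'' indices, those $z_n$ with $|z_n-z_k|<\delta(1-|z_k|)$, and the ``far'' indices. For a far index one has $|A_n(z_k)|$ bounded away from $1$ (by a constant depending on $\delta$), so the elementary estimate $|\ln|E(w,s)||\le C(s)|w|^{s+1}$ valid for $|w|\le 1-\delta'$ — the same inequality underlying Lemma \ref{l:lin_can} — shows the far part is $O\bigl(\sum_n|A_n(z_k)|^{s+1}\bigr)$, which is already the right-hand side. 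So the far part is harmless and contributes nothing to either $\ln|B_k(z_k)|$ or $N_{z_k}(\delta(1-|z_k|))$ beyond the allowed error.

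The heart of the matter is the near part. For $z_n$ close to $z_k$ I would write $\ln|E(A_n(z_k),s)|=\ln|1-A_n(z_k)|+\Re\sum_{j=1}^s A_n(z_k)^j/j$, and observe that $1-A_n(z_k)=\frac{\bar z_n z_k-|z_n|^2}{1-\bar z_n z_k}$, whose modulus is comparable to $\sigma(z_k,z_n)$ up to factors $1-|A_n(z_k)|$-type corrections; more precisely $|1-A_n(z_k)| = \sigma(z_k,z_n)\cdot\frac{|1-\bar z_k z_n|}{|1-\bar z_n z_k|}\cdot(\text{something})$ — I would record the exact identity $\Bl|\tfrac{z_k-z_n}{1-\bar z_n z_k}\Br| = \Bl|\tfrac{1-A_n(z_k)}{?}\Br|$ carefully, but the upshot is $\ln|1-A_n(z_k)| = \ln\sigma(z_k,z_n)+O(1-|z_k|)$ uniformly over near indices, because for $|z_n-z_k|<\delta(1-|z_k|)$ the ratio of the two ``$1-\bar{}\cdot$'' factors is $1+O(1)$ and in fact within bounded constants of $1$. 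Summing $\ln\sigma(z_k,z_n)$ over the near indices reproduces $-N_{z_k}(\delta(1-|z_k|))$ by the very definition $N_\zeta(r)=\int_0^r \frac{(n_\zeta(t)-1)^+}{t}\,dt$ together with an Abel/Stieltjes rearrangement (the $(n-1)^+$ rather than $n$ accounts for the removed factor $E(A_k,s)$, i.e.\ for excluding $n=k$). The polynomial correction terms $\Re\sum_{j=1}^s A_n(z_k)^j/j$ and the $O(1-|z_k|)$ errors, summed over the near indices, are bounded by $C(\delta,s)\,n_{z_k}(\delta(1-|z_k|))\,(1-|z_k|)$, and since each near $z_n$ has $|A_n(z_k)|^{s+1}$ comparable to $(1-|z_k|)^{s+1}/|1-\bar z_n z_k|^{s+1}\gtrsim$ const, one checks $n_{z_k}(\delta(1-|z_k|))\le C\sum_n|A_n(z_k)|^{s+1}$, so this too fits under the right-hand side.

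I expect the main obstacle to be bookkeeping the precise relation between $|1-A_n(z_k)|$ and the pseudohyperbolic distance $\sigma(z_k,z_n)$, and controlling the error uniformly in $k$: one must verify that for $|z_n-z_k|<\delta(1-|z_k|)$ the quantities $1-|z_n|$, $1-|z_k|$, $|1-\bar z_n z_k|$ are all mutually comparable with constants depending only on $\delta$, so that the logarithmic identity for $\ln|1-A_n(z_k)|$ acquires only an $O(1-|z_k|)=o(1)$ additive error rather than something unbounded. Once that comparison is pinned down, the two-sided bound $|\ln|B_k(z_k)|+N_{z_k}(\delta(1-|z_k|))|\le C(\delta,s)\sum_{n=1}^\infty|A_n(z_k)|^{s+1}$ follows by collecting the far-part estimate, the polynomial-tail estimate, and the near-part error estimate, all three of which are dominated by the stated right-hand side. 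Note this lemma does not require any growth hypothesis on $\psi$ — only $\sum(1-|z_k|)^{s+1}<\infty$ to guarantee $P$ converges — so no regularity of $\psi$ enters; it is purely a local comparison at each point $z_k$.
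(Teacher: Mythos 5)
Your overall route is the paper's: split $\ln|B_k(z_k)|=\sum_{n\ne k}\ln|E(A_n(z_k),s)|$ into indices with $|z_n-z_k|\le\delta(1-|z_k|)$ and the rest, use the identity $1-A_n(z_k)=\bar z_n(z_n-z_k)/(1-\bar z_n z_k)$ to trade $\ln|1-A_n(z_k)|$ for $\ln\sigma(z_k,z_n)$ and hence for the summands of $N_{z_k}(\delta(1-|z_k|))$, absorb the $O(1)$-per-term discrepancies via $n_{z_k}(\delta(1-|z_k|))\le C(\delta)\sum_n|A_n(z_k)|^{s+1}$ (valid because $|A_n(z_k)|\ge(1-\delta)/(2+\delta)$ on the near indices), and control the small factors by Tsuji's estimate \eqref{e:in8}. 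The near part and the portion of the far part with $|A_n(z_k)|<\tfrac12$ are sound, and your closing remark that no hypothesis on $\psi$ is needed is correct.

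There is, however, a genuine gap in the far part. Your justification --- ``for a far index $|A_n(z_k)|$ is bounded away from $1$, so $|\ln|E(w,s)||\le C(s)|w|^{s+1}$, valid for $|w|\le 1-\delta'$, applies'' --- fails: the modulus $|A_n(z_k)|$ is \emph{not} bounded away from $1$ on the far indices. For instance, with $z_k=1-\varepsilon^2$ and $z_n=1-\varepsilon$ one has $|z_n-z_k|\sim\varepsilon\gg\delta(1-|z_k|)=\delta\varepsilon^2$, yet $|A_n(z_k)|\to 2$ as $\varepsilon\to0$, so the hypothesis $|w|\le1-\delta'$ of the estimate you invoke is simply violated, and these are exactly the terms for which the bound is nontrivial. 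What must be ruled out is $A_n(z_k)$ approaching the \emph{point} $1$ (the zero of $E(\cdot,s)$), where $\ln|E(A_n(z_k),s)|\to-\infty$; boundedness of the modulus away from $1$ is the wrong condition. The missing ingredient --- and the paper's key geometric input here --- is the inclusion $\mathcal{D}\bigl(z,\frac{\delta}{2+\delta}\bigr)\subset U(z,(1-|z|)\delta)$ of the pseudohyperbolic disc in the Euclidean one. Its contrapositive gives $\sigma(z_k,z_n)\ge\frac{\delta}{2+\delta}$ for every far $z_n$, whence $|1-A_n(z_k)|=|z_n|\,\sigma(z_k,z_n)\ge\frac{\delta}{2(2+\delta)}$, and therefore for $|A_n(z_k)|\ge\tfrac12$ one gets $\bigl|\ln|E(A_n(z_k),s)|\bigr|\le \ln\frac{2(2+\delta)}{\delta}+\sum_{j=1}^s\frac{2^j}{j}\le C(\delta,s)\,|A_n(z_k)|^{s+1}$ (the upper bound on $\ln|E|$ being immediate since $\ln(|z_n|\sigma(z_k,z_n))\le0$). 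Once this lower bound on the pseudohyperbolic distance is inserted, your argument closes and coincides with the paper's proof.
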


The next proposition compares some conditions frequently used in interpolation problems.
\begin{prop} \label{p:1}
Given  a function $\psi \in \mathcal{R}$ for $$ \exists C>0\ \forall z
\in \mathbb{D}:\  N_z\Bl\frac {1-|z|}2\Br\leq  C\psi\Bl \frac 1{1-|z|}\Br
$$
it is necessary and sufficient that
\begin{equation}\label{e:nu_est_rho}
 \exists C>0\ \forall z
\in \mathbb{D}:     n_{z} \Bl \frac{1-|z|}2\Br \le  C\psi\Bl \frac 1{1-|z|}\Br,
\end{equation}
and
\begin{equation}\label{e:ln_prime}
 \forall n\in \mathbb{N}:\;    |\ln \bigl((1-|z_n|)|P'(z_n)|\bigr)| \le  C\psi\Bl \frac 1{1-|z_n|}\Br,
 \end{equation}
where $P$ is the canonical product defined by (\ref{e:can_prod}), $s=[\rho]+1$, where $\rho$ is Polya's order of $\psi$.
\end{prop}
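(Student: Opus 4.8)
The plan is to prove Proposition~\ref{p:1} by establishing the two implications separately, using Lemma~\ref{l:lin_can} and Lemma~\ref{l:index} as the main tools together with the hypothesis $\psi\in\mathcal R$, i.e. $\tilde\psi(r)=O(\psi(r))$.

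\textbf{Necessity.} Assume $N_z\bigl(\tfrac{1-|z|}2\bigr)\le C\psi\bigl(\tfrac1{1-|z|}\bigr)$ for all $z\in\D$. First I would derive \eqref{e:nu_est_rho}: the integral $N_z(r)=\int_0^r\frac{(n_z(t)-1)^+}t\,dt$ dominates, for $r=\tfrac{1-|z|}2$, the quantity $(n_z(\tfrac{1-|z|}4)-1)^+\cdot\int_{(1-|z|)/4}^{(1-|z|)/2}\frac{dt}t=(n_z(\tfrac{1-|z|}4)-1)^+\ln 2$; combining this with the $\eqref{e:polya}$-regularity of $\psi$ to absorb the change of radius from $\tfrac{1-|z|}4$ to $\tfrac{1-|z|}2$ gives \eqref{e:nu_est_rho}. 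Next, \eqref{e:nu_est_rho} is exactly the hypothesis of Lemma~\ref{l:lin_can} (since $\psi$ satisfies \eqref{e:polya} and $s=[\rho]+1>\rho=\rho^*[\psi]$), so $\log|P(z)|\le C\tilde\psi\bigl(\tfrac1{1-|z|}\bigr)\le C\psi\bigl(\tfrac1{1-|z|}\bigr)$, the last step using $\psi\in\mathcal R$. This gives the upper bound in \eqref{e:ln_prime} for $|P'(z_n)|$ after a Cauchy-type estimate on a circle of radius comparable to $(1-|z_n|)$ around $z_n$ (again invoking $\eqref{e:polya}$ to handle the radius shift, and noting $\log\frac1{1-|z_n|}=O(\psi(\frac1{1-|z_n|}))$ since $\psi$ is unbounded of finite order). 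For the lower bound in \eqref{e:ln_prime}: since $P$ vanishes simply at $z_n$, $|P'(z_n)|=|B_n(z_n)|\cdot|E'(A_n(z_n),s)/\!\ldots|$—more precisely $P'(z_n)=B_n(z_n)\cdot\frac{d}{dz}E(A_n(z),s)\big|_{z=z_n}$, and the derivative factor is, up to a bounded factor, $|A_n'(z_n)|\asymp(1-|z_n|)^{-1}$; hence $\ln\bigl((1-|z_n|)|P'(z_n)|\bigr)=\ln|B_n(z_n)|+O(1)$. By Lemma~\ref{l:index} with $\delta=\tfrac12$, $\bigl|\ln|B_n(z_n)|+N_{z_n}(\tfrac{1-|z_n|}2)\bigr|\le C\sum_k|A_k(z_n)|^{s+1}\le C\tilde\psi\bigl(\tfrac1{1-|z_n|}\bigr)\le C\psi\bigl(\tfrac1{1-|z_n|}\bigr)$ (using Lemma~\ref{l:lin_can} and $\psi\in\mathcal R$ again), so $|\ln|B_n(z_n)||\le N_{z_n}(\tfrac{1-|z_n|}2)+C\psi(\cdot)\le C\psi(\cdot)$ by hypothesis. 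This yields \eqref{e:ln_prime}.

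\textbf{Sufficiency.} Assume \eqref{e:nu_est_rho} and \eqref{e:ln_prime} hold. From \eqref{e:nu_est_rho} and Lemma~\ref{l:lin_can} we again get $\sum_k|A_k(z)|^{s+1}\le C\tilde\psi\bigl(\tfrac1{1-|z|}\bigr)$ for all $z$. Apply Lemma~\ref{l:index} with $\delta=\tfrac12$ at the point $z_n$: $N_{z_n}(\tfrac{1-|z_n|}2)\le -\ln|B_n(z_n)|+C\sum_k|A_k(z_n)|^{s+1}$. As in the necessity part, $-\ln|B_n(z_n)|=-\ln\bigl((1-|z_n|)|P'(z_n)|\bigr)+O(1)\le C\psi\bigl(\tfrac1{1-|z_n|}\bigr)$ by \eqref{e:ln_prime}, while $\sum_k|A_k(z_n)|^{s+1}\le C\tilde\psi(\cdot)\le C\psi(\cdot)$ since $\psi\in\mathcal R$. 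Hence $N_{z_n}(\tfrac{1-|z_n|}2)\le C\psi\bigl(\tfrac1{1-|z_n|}\bigr)$ for all $n$. Finally, to pass from the estimate at the points $z_n$ of the sequence to the estimate at \emph{all} $z\in\D$ claimed in the proposition, I would use a standard argument: if $|z|$ is close to $1$ and $z$ is far from every $z_k$ (distance $\ge\tfrac14(1-|z|)$ say) then $n_z(\tfrac{1-|z|}4)$ is controlled and $N_z$ is small; otherwise pick $z_m$ with $|z-z_m|<\tfrac14(1-|z|)$, so that $1-|z_m|\asymp 1-|z|$ and the disc $U(z,\tfrac{1-|z|}2)$ is contained in $U(z_m, C(1-|z_m|))$, which lets one bound $N_z$ by a fixed multiple of $N_{z_m}$ plus a controlled error, invoking \eqref{e:polya} one last time.

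\textbf{Main obstacle.} The delicate point is the bookkeeping in Lemma~\ref{l:index}'s error term and the transfer between $N_z$ at a general $z$ and $N_{z_n}$ at a sequence point: one must be careful that the shift of the argument of $\psi$ (from $\tfrac1{1-|z|}$ to $\tfrac{C}{1-|z|}$ or to $\tfrac1{1-|z_m|}$) is absorbed by \eqref{e:polya}, and that the auxiliary terms $\log\frac1{1-|z|}$ arising from Cauchy estimates on $P'$ are genuinely dominated by $\psi\bigl(\tfrac1{1-|z|}\bigr)$, which holds because $\psi$ is unbounded and nondecreasing of finite P\'olya order. The identity relating $P'(z_n)$, $B_n(z_n)$ and $A_n'(z_n)$ must also be written out carefully so that the $O(1)$ really is $O(1)$ uniformly in $n$; this uses that $|A_n(z_n)|=1$ and that $E'(w,s)$ near $|w|=1$ away from $w=1$ is bounded above and below, which in turn follows from the separation implicit in \eqref{e:nu_est_rho}.
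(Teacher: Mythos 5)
Your argument is correct and takes essentially the same route as the paper's (very terse) proof: the paper likewise gets \eqref{e:nu_est_rho} from the integral estimate $(n_z(\frac{\delta}{\alpha}(1-|z|))-1)^+\ln\alpha\le N_z(\delta(1-|z|))$ used in the proof of Theorem \ref{t:sufficient_cond_interpol}, and obtains \eqref{e:ln_prime} together with the converse from Lemmas \ref{l:lin_can} and \ref{l:index}, the identity \eqref{e:p'est} (which is exactly your computation $P'(z_n)=B_n(z_n)\,A_n'(z_n)\,E'(1,s)$), and the passage from the sequence points to arbitrary $z\in\mathbb{D}$ via Remark \ref{r:N_z}. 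One small correction to your closing worry: no separation hypothesis is needed to control the $O(1)$ term, since $A_n(z_n)=1$ exactly and $\frac{d}{dw}E(w,s)\big|_{w=1}=-e^{1+\frac12+\cdots+\frac1s}$ is a fixed nonzero constant, uniformly in $n$.
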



\subsection{Proofs of the lemmas}
\begin{proof}[Proof of Lemma \ref{l:lin_can}] The proof repeats, in general, the original Linden's one (\cite{L_rep}), therefore we only sketch it, emphasizing distinctions. Without loss of generality we may assume that $\frac 12\le |z_n|<1$, $n\in \mathbb{N}$. The first inequality \eqref{e:lem1} is the assertion of Tsuji's theorem  \cite{Ts}. To prove the second one we denote
 \[\square(\rr\vfi)=\Bigl\{\rho e^{i\theta}:  r\le \rho < \frac {1+r}2 ,
|\theta  -\vfi|\le  \frac{1-r}4\Bigr\}, \] and  $\nu(\rr\vfi)$  being
the number of members of the sequences  $Z=(z_n)$ in $\square(\rr\vfi)$;
$S_{h,k}(\varphi)=\square\Bl(1-2^{-k})e^{i(\varphi+\pi 2^{-k}(2h+1))}\Br$.
Note that the conditions  $n_z\bigl( \frac{1-|z|}2\bigr)=O(\psi(\frac 1{1-|z|}))$ and $\nu(z)=O(\psi(\frac 1{1-|z|}))$  are equivalent as $|z|\uparrow 1$.

Then, for  $z_n\in S_{h,k}(\varphi)$, $z=re^{i\varphi}$ we have (see \cite[p.24]{L_rep})
\begin{equation*}
    \Bl \frac {1-|z_n|^2}{|1-z\bar z_n|}\Br^{s+1} \le \frac {1}{((1-r+ r2^{-k-1})^2 +h^22^{2-2k})^{\frac {s+1}2}}.
\end{equation*}
Thus, similar to  that as  one deduces formula  (18) from \cite{L_rep}, we obtain
\begin{gather}
\nonumber
\sum_{2^{-k-1}<1-|z_n|\le 2^{-k}}      \Bl \frac {1-|z_n|^2}{|1-z\bar z_n|}\Br^{s+1} \le  \\ \le \sum_{h=0}^{2^{k-1}-1}\frac {2 \psi(2^k)}{((1-r+ r2^{-k-1})^2 +h^22^{2-2k})^{\frac {s+1}2}} \le
\frac{2\psi(2^k)(8+B(\frac 12, \frac s2))}{2^{ks} (1-r+r2^{-k-1})^s},
\label{e:annul_est}
    \end{gather}
where $B(x,y)$ is the Beta-function.

Let  $r=|z|\in [1-2^{-\nu}, 1-2^{-\nu-1})$.
It follows from \eqref{e:annul_est} and \eqref{e:psi_con} that
\begin{gather*}\nonumber
    \sum_{k=\nu+1}^\infty  \sum_{2^{-k-1}<1-|z_n|\le 2^{-k}} \Bl \frac {1-|z_n|^2}{|1-z\bar z_n|}\Br^{s+1} \le  C(s)\sum_{k=\nu+1}^\infty   \frac{\psi(2^k)}{2^{ks}(1-r)^s} \le \\
    \le \frac {C(s)\psi(2^{\nu+1})}{(1-r)^s2^{(\nu+1)s}} \sum_{k=\nu+1}^\infty 2^{(s-\rho)(k-\nu-1)} \le {C(s,\rho)}\psi\Bl \frac1{1-r}\Br.\label{e:tail_est}
\end{gather*}
Further,  \eqref{e:annul_est} implies
\begin{gather*}
    \sum_{k=1}^\nu   \sum_{2^{-k-1}<1-|z_n|\le 2^{-k}}\Bl \frac {1-|z_n|^2}{|1-z\bar z_n|}\Br^{s+1} \le C(s)  \sum_{k=1}^\nu  \psi(2^k)\le \\ \le C(s, \rho) \sum_{k=1}^\nu \int_{1-2^{1-k}}^{1-2^{-k}} \frac{\psi (\frac 1{1-t})}{1-t} \, dt 
\le     C \tilde\psi(2^\nu) \le C \tilde \psi\Bl \frac 1{1-r}\Br.
\end{gather*}
It is well-known that $\psi(x)=O(\tilde\psi(2x))$ $(x\to+\infty)$, so the assertion of the lemma follows from the two latter estimates.
\end{proof}

\begin{proof}[Proof of Lemma \ref{l:index}]
Without loss of generality, we assume that $Z$ is an infinite sequence, $|z_n|\ge \frac 12$, $n\in \mathbb{N}$, and $s\in \N$.
We denote  $\delta(1-|z_k|)=\eta_k$, and note that
$$ N_{z_k} (\eta_k)=\sum_{0< |z_n-z_k|\le \eta_k}  \ln \frac{\eta_k}{|z_n-z_k|}=\int_0^{\eta_k} \frac{n_{z_k}(x)-1}x dx.$$
Then
\begin{gather}\nonumber
\ln |B_k(z_k)|+ N_{z_k} (\eta_k) =\\ = \sum_{n\ne k} \Bigl( \ln \Bm \frac{(z_k-z_n)\bar z_n}{1-\bar z_n z_k}\Bm+ \Re \sum_{j=1}^s \frac 1j (A_n(z_k))^j\Bigr)  \nonumber  +\sum_{0< |z_n-z_k|\le \eta_k} \ln \frac{\eta_k}{|z_n-z_k|}= \\ =\sum_{0< |z_n-z_k|\le \eta_k}
\Bigl( \ln  \frac{\eta_k |z_n|}{|1-\bar z_n z_k|}+  \Re \sum_{j=1}^s \frac 1j (A_n(z_k))^j\Bigr) + \nonumber\\ +  \sum_{|z_n-z_k|> \eta_k}
\ln |E(A_n(z_k),s)|.
\label{e:error_term}
    \end{gather}
It is easy to see (\cite[p.~528]{Illin}) that $|1-\bar z_n z_k|\le (2+\delta)(1-|z_k|)$ for $|z_n-z_k|\le \eta_k$. Taking into account that
$|A_n(z)|\le 2$, $z\in \mathbb{D}$, we obtain for $|z_n-z_k|\le \eta_k$
\begin{gather*}\label{e:A_n_est}
    C'(\delta)\le |A_n(z_k)|\le C''(\delta), \quad
    \frac \delta {2(2+\delta)} \le \frac {\eta_k |z_n|} {|1-\bar z_n z_k|} \le  \delta .
\end{gather*}
Therefore for the first sum in the right-hand side of \eqref{e:error_term} we get
\begin{gather}\nonumber
\biggl|    \sum_{0< |z_n-z_k|\le \eta_k}
\Bigl( \ln  \frac{\eta_k |z_n|}{|1-\bar z_n z_k|}+ \Re \sum_{j=1}^s \frac 1j (A_n(z_k))^j\Bigr)  \biggr|\le \\  \le \sum_{0< |z_n-z_k|\le \eta_k}
C(s, \delta) \le C(s, \delta) \sum_{0< |z_n-z_k|\le \eta_k} |A_n(z_k)|^{s+1}.\label{e:first_sum}
\end{gather}
Tsuji   (see \cite[p.8]{Ts}) proved that for an appropriate branch of the logarithm
\begin{equation}\label{e:in8}
\sum_{|A_n(z)|< \frac12}|\ln E(A_n(z),s)|\le 2 \sum_{|A_n(z)|<
\frac12} |A_n(z)|^{s+1}.
\end{equation}

It can be checked  that
for the pseudohyperbolic disc $\mathcal{D}(z,s)=\Bigl\{
\zeta: \sigma(z, \zeta)<s\Bigr\}$ (\cite[I.1]{Gar}) the inclusion
\begin{equation*}
\mathcal{D}\Bigl(z,\frac{\delta}{2+\delta}\Bigr)\subset U(z, (1-|z|)\delta)
\end{equation*} holds (see \cite[p.529]{Illin} for details).
Thus, for   $z_n\not \in U(z_k, \eta_k)$  we have
$$ 0\ge \ln \Bm \frac {\bar z_n (z_n -z_k)}{1-\bar z_n z_k}\Bm \ge \ln \frac{\delta}{2(2+\delta)}.$$
Hence
\begin{gather*}\nonumber
\sum_{\begin{substack}{|A_n(z_k)|\ge \frac12 \\ |z_n-z_k|>\eta_k} \end{substack}}
\ln |E(A_n(z),s)|  = \sum_{\begin{substack}{|A_n(z_k)|\ge \frac12 \\ |z_n-z_k|>\eta_k} \end{substack}} \Bigl( \ln \Bm \frac{(z_k-z_n)\bar z_n}{1-\bar z_n z_k}\Bm+ \Re \sum_{j=1}^s \frac 1j (A_n(z_k))^j \Bigr)\ge  \\ \ge - \sum_{\begin{substack}{|A_n(z_k)|\ge \frac12 \\ |z_n-z_k|>\eta_k} \end{substack}} \Bigl( \ln \frac{2(2+\delta)}\delta + \sum_{j=1}^s \frac{2^j}j\Bigr)\ge - C(\delta,s) \sum_{\begin{substack}{|A_n(z_k)|\ge \frac12 \\ |z_n-z_k|>\eta_k} \end{substack}} |A_n(z_k)|^{s+1}.
\label{e:in8'}
\end{gather*}
Relations \eqref{e:error_term}--\eqref{e:in8} and the last inequality give the assertion of the lemma.
\end{proof}

\section{Proofs of interpolation theorems and Proposition \ref{p:1}}
\begin{proof}[Proof of Theorem \ref{t:sufficient_cond_interpol}]
First of all, we note that the estimate
$$\Bl n_z\Bl\frac \delta\alpha(1-|z|) \Br-1\Br^+ \ln \alpha \le \int_{\frac \delta\alpha(1-|z|)}^{\delta(1-|z|)} \frac{(n_z(x)-1)^+}{x}\, dx\le N_z(\delta(1-|z|)), $$
where $ 0<\alpha\delta<1<\alpha $, yields
\begin{equation}\label{e:nu_est}
    \max _{\theta} n_{re^{i\theta}} (\delta (1-r))\le {C } \psi\Bl {\frac1{1-r}}\Br.
\end{equation}

It follows from the estimate \eqref{e:nu_est} and Lemma  \ref{l:lin_can} that
$$\sum_{n=1}^\infty |A_n(z)|^{s+1}\le C(s) \tilde \psi\Bl \frac 1{1-|z|}\Br, \quad z\in \mathbb{D}.$$
From this estimate and Lemma  \ref{l:index} we deduce
\begin{equation}\label{e:error_est_ro}
|\ln |B_k(z_k)|| \le C(\delta,s) \tilde \psi \Bl \frac 1{1-|z_k|}\Br, \quad k\to+\infty.
\end{equation}

Consider the interpolation function
\begin{equation*}\label{e:inter_func}
    f(z)=\sum_{n=1}^\infty \frac {b_n}{z-z_n} \frac{P(z)}{P'(z_n)}\Bl \frac{1-|z_n|^2}{1-\bar z_n z}\Br^{s_n-1},
\end{equation*}
where an increasing sequence of natural numbers  $(s_n)$ will be specified below. It is not hard to check that \eqref{e:inter_prob} holds.

Moreover, taking into account Remark \ref{r:subprod} and the inequality $|A_n(z)|\le 2$,   we have the following estimates
\begin{gather}\nonumber
\Bm \frac {1-\bar z_n z}{\bar z_n} \frac {P(z)}{z-z_n}\Bm =\Bm B_n(z) \exp\Bigl\{ \sum_{j=1}^s \frac 1j (A_n(z))^j \Bigr\}\Bm \le \\
\le \exp \Bigl\{C(s) \sum_{n=1}^\infty |A_n(z)|^{s+1}\Bigr\}\le \exp\Bigl\{C \tilde \psi \Bl\frac {1}{1-|z|}\Br\Bigr\},
\label{e:growth_interpol_est}\\
\frac{P'(z_n)(1-|z_n|^2)}{\bar z_n}= -B_n(z_n) \exp\Bigl\{1+\frac 12+\dots +\frac 1s\Bigr\}. \label{e:p'est}
\end{gather}
Therefore, using our assumption on $(b_n)$,  \eqref{e:growth_interpol_est}, and \eqref{e:p'est}, we deduce
\begin{gather} \nonumber
    |f(z)|=\biggl| \sum_{n=1}^\infty b_n \frac{P(z)(1-\bar z_n z)}{\bar z_n (z-z_n)} \frac{\bar z_n}{(1-|z_n|^2)P'(z_n)} \Bl \frac{1-|z_n|^2}{1-\bar z_n z}\Br^{s_n}\biggr| \le \\
    \le \sum_{n=1}^\infty \exp\Bigl\{ C\tilde \psi \Bl \frac {1}{1-|z_n|}\Br \Bigr\} \exp\Bigl\{  {C}\tilde \psi \Bl \frac1{1-|z|}\Bigr\} \frac 1{|B_n(z_n)|} \Bl \frac{1-|z_n|^2}{|1-\bar z_n z|}\Br^{s_n} \le \nonumber
        \\ \le \exp\Bigl\{ C \tilde \psi\Bl\frac {1}{1-|z|}\Br \Bigr\} \sum_{n=1}^\infty \exp\Bigl\{ C'' \tilde \psi\Bl\frac {1}{1-|z_n|}\Br \Bigr\}  \Bl \frac{1-|z_n|^2}{|1-\bar z_n z|}\Br^{s_n}. \label{e:|f|_est}
\end{gather}

If  $\psi$ is nondecreasing then $\tilde \psi$ is convex with respect to the logarithm. The condition $\psi(t)\to+\infty$ yields $\ln x=o(\tilde \psi(x))$ $(x\to+\infty)$.
By Clunie-K\"ovari's theorem \cite{ClKo}, given a positive constant $C_0$ there exists an entire function  $\Phi(z)=\sum_{n=0}^\infty \vfi_n z^n$ such that $\ln M(t, \Phi)=(C_0+o(1)) \tilde \psi(C_0 t)$ $(t\to\infty)$. Then there is  $t_0>0$ such that the following estimates of the maximal term $\mu(t,\Phi)=\max\{|\vfi_n|t^n : n\in \mathbb{Z}_+\}$ are valid:
\begin{gather*}\label{e:max_term_est_up}
    \mu(t, \Phi)\le \exp\{ 2 \tilde \psi (C_0t)\} ,\quad \mu(t, \Phi)\ge \exp\Bigl\{ \frac 14 \tilde \psi \Bigl(\frac{C_0t}2\Bigr)\Bigr\}  \quad t\ge t_0.\\
\end{gather*}
Let  $\hat \Phi (z)=\sum_{n=0}^\infty  \hat\vfi_n z^n$ be Newton's majorant for the function $\Phi(z)$ (see \cite[Chap. IX, \S 68]{Val}). Then $\mu(r, \Phi)=\mu(r, \hat \Phi)$, and the sequence  $\varkappa_n=\hat \vfi_{n-1} /\hat \vfi_n $, $\varkappa_0=0$ is unbounded and increasing. We choose the sequence $(s_n)$   such that  $\varkappa _{s_n} \le \frac 1{1-|z_n|} < \varkappa _{s_{n+1}} $.
Then
\begin{equation}\label{e:max_term_n}
    \mu\Bigl(\frac 1{1-|z_n|},\Phi\Bigr)= \hat \varphi_{s_n} \frac 1{(1-|z_n|)^{s_n}}, \quad  \mu(t,\Phi)\ge  \hat \varphi_{s_n} t^{s_n}.
\end{equation}
Using the obtained inequalities and choosing  $C_0>\max\{ 8C'', 2\}$, we deduce
\begin{gather}\allowdisplaybreaks \nonumber
 \sum_{n=1}^\infty \exp\Bigl\{ C'' \tilde \psi\Bl\frac {1}{1-|z_n|}\Br \Bigr\}  \Bl \frac{1-|z_n|^2}{|1-\bar z_n z|}\Br^{s_n} \le \\
\le \sum_{n=1}^\infty \exp\Bigl\{ C'' \tilde \psi\Bl\frac {1}{1-|z_n|}\Br \Bigr\}  \frac {\hat \vfi_{s_n}\Bl \frac2{1-|z|}\Br ^{s_n}}{
\hat \vfi_{s_n} \Bl \frac1{1-|z_n|}\Br^{s_n}} \le \nonumber \\ \allowdisplaybreaks
\le \sum_{n=1}^\infty \exp\Bigl\{ C'' \tilde \psi\Bl\frac {1}{1-|z_n|}\Br \Bigr\}  \frac {\mu \Bl \frac2{1-|z|}, \hat \Phi\Br}{
\mu \Bl \frac1{1-|z_n|}, \hat \Phi \Br} \le \nonumber \\
\le \sum_{n=1}^\infty \exp\Bigl\{ C'' \tilde \psi\Bl\frac {1}{1-|z_n|}\Br + 2C_0\tilde\psi \Bl\frac{2C_0}{1-|z|} \Br - \frac {C_0}4\tilde\psi \Bl\frac{C_0}{2(1-|z_n|)}\Bigr\}  \le \nonumber \\
\le \exp\Bigl\{ 2C_0\tilde\psi \Bl\frac{2C_0}{1-|z|} \Br \Bigr\} \sum_{n=1}^\infty \exp\Bigl\{ - \frac {C_0}8\tilde\psi \Bl\frac{C_0}{2(1-|z_n|)}\Br\Bigr\}  \le \nonumber \\ \ \le \exp\Bigl\{ 2C_0\tilde\psi \Bl\frac{2C_0}{1-|z|} \Br \Bigr\} \sum_{n=1}^\infty \exp\Bigl\{ -(s+1)\ln \frac 1{1-|z_n|} \Bigr\} \le \nonumber \\ \le   C(s) \exp\Bigl\{ 2C_0\tilde\psi \Bl\frac{2C_0}{1-|z|}\Br \Bigr\}.
    \label{e:kovar_stuff}
\end{gather}

Substituting estimate  \eqref{e:kovar_stuff} in \eqref{e:|f|_est},
we get  \eqref{e:growth_con_psi}.
\end{proof}

\begin{proof}[Proof of Theorem \ref{t:interpol_criter}]
The implications $(i)\Rightarrow(ii)$  and $(ii)\Rightarrow(i)$ follow from the theorem  of B.\ V.\ Vynnytskyi, I.\ B.\ Sheparovych (\cite{VSh04}),   and Theorem \ref{t:sufficient_cond_interpol}, respectively.

We then show that (ii) and (iii) are equivalent. \label{p:2iff3}

As it was  proved,  for $|z-z_n|\le \delta(1-|z|)$ we have
 $1-|z|\le |1-\bar z_n z| \le (2+\delta)(1-|z|)$, i.e. $ 0\le \ln \frac{|1-\bar z_n z|}{1-|z|} \le \ln(2+\delta)$.
 Hence,
 \begin{multline}\label{e:compar_concentr}
    0\le \sum\limits_{0<|z_k-z_n|\leqslant\delta(1-|z_k|) } \biggl(\ln \frac 1{\sigma(z_n,z_k)} -\ln \frac{1-|z_k|}{|z_n-z_k|}\biggr)=\\ =\sum\limits_{|z_k-z_n|\leqslant\delta(1-|z_k|) }\ln\frac{|1-\bar z_nz_k|}{1-|z_k|}
\leqslant  N_{z_k}(\alpha\delta(1-|z_k|))\ln (2+\delta) \le \\ \le  N_{z_k}(\alpha\delta(1-|z_k|)) \ln (2+\delta)\ln \alpha.
 \end{multline}
Further,
\begin{gather*}
 \sum\limits_{0<|z_k-z_n|\leqslant\delta(1-|z_k|) } \ln \frac {1-|z_k|}{|z_k-z_n|} =
\int_0^{\delta(1-|z_k|)}\ln\frac
{1-|z_k|}\tau d(n_{z_k}(\tau)-1)
=\\ =\ln\frac1\delta\cdot(n_{z_k}(\delta(1-|z_k|))-1)+ N_{z_k}(\delta(1-|z_k|)).
\end{gather*}
Therefore
$$N_{z_k}(\delta(1-|z_k|)) \le  \sum\limits_{0<|z_k-z_n|\leqslant\delta(1-|z_k|) } \ln \frac {1-|z_k|}{|z_k-z_n|} \le  $$ $$ \le N_{z_k}(\alpha \delta(1-|z_k|))\bigl(1+ \ln \alpha \ln \frac 1\delta\bigr).$$
The latter inequality together with \eqref{e:compar_concentr} proves the equivalence between (ii) and~(iii).
\end{proof}

\begin{proof}[Proof of Proposition \ref{p:1}] In fact, the necessity of  (\ref{e:nu_est_rho}) has been already established  in the proof of Theorem~\ref{t:sufficient_cond_interpol}.
Necessity of (\ref{e:ln_prime}) and sufficiency  follow from Lemmas \ref{l:lin_can}, \ref{l:index}, formula \eqref{e:p'est},  and Remark \ref{r:N_z}.
\end{proof}

\section{Proofs of the oscillation theorems}

\begin{proof}[Proof of Theorem \ref{t:zeros_de}]
 Let $f(z)=P(z)e^{g(z)}$, be analytic in $\D$ where $P$ is  the canonical product defined by \eqref{e:can_prod} with the zeros sequence $Z=(z_k)$. We can rewrite  \eqref{e:oscil_eq}  as
\begin{equation}
 \label{e:osc_p_g}
P''+2 P'g' +(g'^2 +g'' +a)P=0,
\end{equation}
and, consequently
\begin{equation}
 \label{e:g_interpol}
g'(z_k)=-\frac{P''(z_k)}{2P'(z_k)}=:b_k, \quad k\in \mathbb{N}.
\end{equation}
Therefore, in order to find  a solution of \eqref{e:oscil_eq} with the zero sequence $Z$ we have to find an analytic function $h=g'$ solving the interpolation problem $h(z_k)=b_k$, $k\in \N$.
Using Cauchy's integral theorem  and Lemma \ref{l:lin_can} we deduce
\begin{equation*}
 |P''(z_k)|\le \frac{8}{(1-|z_k|)^2} \max _{|z|=\frac{1+|z_k|}{2}} |P(z)|\le \frac{8}{(1-|z_k|)^2} e^{C \tilde \psi (\frac{ 2}{1-|z_k|})}.
\end{equation*}
On the other hand, \eqref{e:p'est} and \eqref{e:error_est_ro} imply  (cf. \eqref{e:ln_prime}) that
$$ \frac{1}{|P'(z_k)|} \le (1-|z_k|) e^{C\tilde \psi (\frac{ 1}{1-|z_k|})}.$$
Hence
\begin{gather*}
 |b_k|=\Bigl| \frac{P''(z_k)}{2P'(z_k)}\Bigr| \le \frac{4}{1-|z_k|} e^{C \tilde \psi (\frac{ 1}{1-|z_k|})}= \\ =
e^{C \tilde \psi (\frac{ 2}{1-|z_k|})+ \ln \frac{4}{1-|z_k|}}\le e^{C \tilde \psi (\frac{ 1}{1-|z_k|})}, \quad k\in \N,
\end{gather*}
because $\tilde\psi (t)/\ln t\to+\infty$ $(t\to+\infty)$. Since the assumptions of Theorem \ref{t:sufficient_cond_interpol} are satisfied there exists a function $h$  analytic in $\D$ such that $h(z_k)=b_k$ and $\ln M(r,h)\le C \tilde \psi (\frac{1}{1-r})$,
$r\uparrow1$, i.e. $\ln M(r,g')\le C \tilde \psi (\frac{1}{1-r})$,
$r\uparrow1$.

Then, applying Cauchy's theorem once more, we get that
$$ M(r,g'')\le \frac{2 }{1-r} M\Bigl( \frac{1+r}{2}, g'\Bigr) \le e^{C \tilde \psi (\frac 1{1-r})}, \quad r \uparrow1.$$
From \eqref{e:osc_p_g} we obtain
$$ |a(z)| \le \Bigl| \frac{P''(z)}{P(z)}\Bigr|+ 2|g'(z)| \Bigl| \frac{P'(z)}{P(z)}\Bigr|+ |g'(z)|^2+ |g''(z)|.$$
It follows from results of \cite{ChyGuHe} or \cite{CHR1} that for any $\delta>0$ there exists a set $E_
\delta \subset [0,1)$ such that
\begin{equation*}
 \max \Bigl\{  \frac{|P''(z)|}{|P(z)|}, \frac{|P'(z)|}{|P(z)|} \Bigr\} \le \frac{1}{(1-|z|)^q}, \quad |z|\in [0,1)\setminus E_\delta,
\end{equation*}
where $q\in (0,+\infty)$,  and  $m_1(E_\delta \cap [r,1))\le \delta(1-r)$ as $r \uparrow1$.
Thus, \begin{equation}\label{e:a_est_excep}
       |a(z)| \le e^{\tilde C\tilde\psi (\frac{1}{1-|z|})} , \quad |z|\in [0,1)\setminus E.
      \end{equation}
Since $M(r,a)$ increases,  condition \eqref{e:polya} and Lemma 4.1 from \cite{CHR1} imply that  inequality \eqref{e:a_est_excep} holds for all $z\in \D$ for an appropriate choice of $\tilde C$.
\end{proof}

\begin{proof}[Proof of Theorem \ref{t:zeors_sharp}]
 Let $\rho>0$ be given. Let $\varepsilon_n= \frac12 e^{-2^{n\rho}}$, $n\in \N$. Let $(z_n)$ be the sequence defined  by $$z_{2n-1}=1-2^{-n}, \quad z_{2n}=1-2^{-n}+\varepsilon_n.$$
Then for $m\in \{2n-1, 2n\}$ we have as $ n\to\infty$
\begin{gather*}
 N_{z_m} \Bigl( \frac{1-|z_m|}{2}\Bigr)=\sum\limits_{0<|z_k-z_m|\leqslant\frac12(1-|z_m|) } \ln \frac {1-|z_k|}{2|z_k-z_m|} =\\ =\ln \frac{2^{-n} +O(\varepsilon_n)}{2\varepsilon _n} +O\Bigl(\ln \frac{2^{-n}+O(\varepsilon_n)}{2^{-n}+O(\varepsilon_n)}\Bigr)
= 2^{n\rho} +O(n)\sim\Bigl(\frac1{1-|z_m|}\Bigr)^{\rho}.\end{gather*}
Thus, assertion i) is proved.

To prove assertion ii) we assume on the contrary that there exists a solution $f=Be^g$ of \eqref{e:oscil_eq} having the zero sequence $(z_n)$, where $B$ is the Blaschke product, and such that
\begin{equation}\label{e:assum_a}
\ln M(r,a)\le C (1-r)^{-\rho+\varepsilon_0},
\quad r\in [0,1), \varepsilon_0>0.                                                                                                                                                                                                                    \end{equation}
Repeating the arguments from the proof of Theorem 5 \cite{Heit_cmft} (the only difference is that we have smaller $\varepsilon_n$), one can show that

Therefore $$|g'(z_{2n})|=\Bigl| \frac{B''(z_{2n})}{2B'(z_{2n})}\Bigr|\ge    \frac15 e^{2^{n\rho}}\ge C \exp \frac{1}{(1-|z_{2n}|)^\rho} , \quad n\to+\infty.$$
Hence, \begin{equation}\label{e:g'_growth_low_est}
        \ln M(|z_{2n}|, g')\ge (1-|z_{2n}|)^{-\rho}, \quad n\to+\infty.
       \end{equation}
But \eqref{e:assum_a} implies (see e.g. \cite{HKR}) that
$\ln \ln M(r,f) \le (1-r)^{-\rho+\varepsilon_0}, \quad r\uparrow1.$
Repeating the arguments from the proof of Lemma 2 we get for $R_n=1- 3 \cdot 2^{-n-1}$ and $\delta=\frac 14$
that $( n\to +\infty)$ \begin{gather}\nonumber
    \Re g(R_ne^{i\theta}) \le \ln M(R_n, f)+|\ln |B(R_n e^{i\theta})|| \le \\ \nonumber \le \ln M(R_n, f) +N_{R_n e^{i\theta}}\Bigl(\frac{1-R_n}4\Bigr) + C\Bigl(\frac 14, 1\Bigr) \sum_{k=1}^\infty
 \frac{1-|z_k|^2}{|1-z_kR_n e^{i\theta}|} \le \\ \le\exp \Bigl\{ \Bigl( \frac1{1-R_n}\Bigr)^{\rho-\varepsilon_0}\Bigr\} + O\Bigl( \frac1{1-R_n}\Bigr)^{\max\{\rho, 1\}}\le \exp \Bigl\{ \Bigl( \frac1{1-R_n}\Bigr)^{\rho-\varepsilon_0/2}\Bigr\}. \label{e:max_re_g_est}
     \end{gather}
Since $\Re g     $ is harmonic,  $B(r, \Re g)=\max\{ \Re g(re^{i\theta}): \theta \in [0, 2\pi]\}$ is an increasing function.
It follows from \eqref{e:max_re_g_est} and the relation $1-R_n\asymp 1-R_{n+1}$ that
$$ B(r,\Re  g)\le  \exp\Bigl \{ \Bigl( \frac C{1-r}\Bigr)^{\rho-\varepsilon_0/2}\Bigr\}, \quad r\to\infty.$$
The last estimate and Caratheodory's inequality (\cite[Chap.1, \S 6]{Le}) imply
$$\ln  M(r,g) \le C (1-r)^{-\rho+\varepsilon_0/2}, \quad r\uparrow1.$$
This contradicts to \eqref{e:g'_growth_low_est}. The theorem is proved.
\end{proof}

Faculty of Mechanics and Mathematics,

Ivan Franko National University of Lviv,

 Universytets'ka 1,
79000, Lviv,

Ukraine,

chyzhykov@yahoo.com

\medskip
Institute of Physics, Mathematics and Computer Science,

Drohobych Ivan Franko State Pedagogical  University,

Stryis'ka 3,
Drohobych,

Ukraine,

isheparovych@ukr.net

\medskip

\end{document}